%------------------------------------------------------------------------------
% Here please write the date of submission of paper or its revisions: December 8, 2016
%------------------------------------------------------------------------------
%
\documentclass[12pt, reqno]{amsart}
\usepackage{amsmath, amsthm, amscd, amsfonts, amssymb, graphicx, color}
\usepackage[bookmarksnumbered, colorlinks, plainpages]{hyperref}
\hypersetup{colorlinks=true,linkcolor=red, anchorcolor=green,
citecolor=cyan, urlcolor=red, filecolor=magenta, pdftoolbar=true}
\usepackage{cite}
\textheight 22.5truecm \textwidth 14.5truecm
\setlength{\oddsidemargin}{0.35in}\setlength{\evensidemargin}{0.35in}

\setlength{\topmargin}{-.5cm}

\newtheorem{theorem}{Theorem}[section]
\newtheorem{lemma}[theorem]{Lemma}

\newtheorem{corollary}[theorem]{Corollary}
\theoremstyle{definition}

\theoremstyle{remark}
\newtheorem{remark}[theorem]{Remark}
\numberwithin{equation}{section}

\begin{document}

\setcounter{page}{1}

\title[2-Local derivations
on   matrix algebras]{2-Local derivations on   matrix algebras and
algebras of measurable operators}

\author[Sh. A. Ayupov, K. K. Kudaybergenov  \MakeLowercase{and}  A.K. Alauadinov]{Shavkat Ayupov$^1$, Karimbergen Kudaybergenov$^2$
\MakeLowercase{and} \\ Amir Alauadinov$^3$}
\address{$^1$Institute of
 Mathematics,  National University of
Uzbekistan,  Dormon yoli 29, 100125  Tashkent,   Uzbekistan}
\email{\textcolor[rgb]{0.00,0.00,0.84}{sh$_{-}$ayupov@mail.ru}}

\address{$^2$Department of Mathematics, Karakalpak State University, Ch. Abdirov 1, Nukus 230113, Uzbekistan}
\email{\textcolor[rgb]{0.00,0.00,0.84}{karim2006@mail.ru}}

\address{$^3$Department of Mathematics, Karakalpak State University, Ch. Abdirov 1, Nukus 230113, Uzbekistan}
\email{\textcolor[rgb]{0.00,0.00,0.84}{amir\(_{-}\)t85@mail.ru}}

\subjclass[2011]{Primary 46L57; 47B47; Secondary  47C15; 16W25}

\keywords{matrix algebra;  derivation; inner derivation; $2$-local
derivation; measurable operator}

\date{\today}
\maketitle

\begin{abstract} Let \(\mathcal{A}\) be a unital Banach  algebra
such that any Jordan derivation from  \(\mathcal{A}\) into any
\(\mathcal{A}\)-bimodule \(\mathcal{M}\) is a derivation. We prove
that any 2-local derivation from the algebra $M_n(\mathcal{A})$
into $M_n(\mathcal{M})$ $(n\geq 3)$ is a derivation. We apply this
result to  show that any 2-local derivation on the algebra of
locally measurable operators affiliated  with  a von Neumann algebra
without direct abelian summands  is a derivation.
\end{abstract}

\section{Introduction}

Let \(\mathcal{A}\) be an associative algebra over \(\mathbb{C}\)
the field of complex numbers  and let  \(\mathcal{M}\) be an
\(\mathcal{A}\)-bimodule. A linear map \(D\) from \(\mathcal{A}\)
to \(\mathcal{M}\) is called a \textit{derivation} if \(D(xy) =
D(x)y + xD(y)\) for all \(x, y\in \mathcal{A}.\) If it satisfies
a weaker condition  \(D(x^2) = D(x)x + xD(x)\) for every \(x\in
\mathcal{A}\) then it is called a \textit{Jordan derivation}.  It
is easy to verify that each element \(a \in \mathcal{M}\)
implements a derivation \(D_a\) from \(\mathcal{A}\) into
\(\mathcal{M}\) by \(D_a(x) = ax -xa,\, x \in \mathcal{A}.\) Such
derivations \(D_a\) are called \textit{inner derivations}.

In 1990, Kadison \cite{Kad}  and Larson and Sourour \cite{Lar} independently
introduced the concept of local derivation. A linear
map \(\Delta : \mathcal{A} \to \mathcal{M}\) is called a
\textit{local derivation} if  for every \(x\in\mathcal{A}\) there
exists a derivation \(D_x\)  (depending on \(x\)) such that
\(\Delta(x) = D_x(x).\) It would be interesting to consider under
which conditions local derivations automatically become
derivations. Many partial results have been done in this problem.
In \cite{Kad} Kadison shows that every norm-continuous local
derivation from a von Neumann algebra \(M\) into a dual
\(M\)-bimodule is a derivation. In~\cite{Joh} Johnson extends
Kadison's result and proves every local derivation from a
$C^{\ast}$-algebra $\mathcal{A}$ into any Banach
$\mathcal{A}$-bimodule is a derivation.

Similar problems for local derivations on algebras of measurable
operators \(S(M)\) and locally measurable operators \(LS(M),\)
affiliated with a von Neumann algebra \(M,\) have been considered
in \cite{AK2016} and \cite{HLLM}. Namely, it was proved that if
\(M\) is a von Neumann algebra without abelian direct summand then
every local derivation on \(LS(M)\) is a derivation. Moreover, for
abelian von Neumann algebras \(M\) necessary and sufficient
condition are given in \cite{AKA}  for \(S(M)=LS(M)\) to admit
local derivations which are not derivations (see for details the
survey \cite[Section 5]{AK2016}).

In 1997, \v{S}emrl \cite{Semrl97}  initiated the study of so-called  2-local derivations
and 2-local automorphisms on algebras. Namely, he  described such maps  on the algebra \(B(H)\) of all
bounded linear operators on an infinite dimensional separable Hilbert space  \(H\).

 In the above notations,  map \(\Delta : \mathcal{A} \to
\mathcal{M}\) (not necessarily linear) is called a \textit{2-local
derivation} if, for every \(x,y \in  \mathcal{A},\) there exists a
derivation \(D_{x,y} : \mathcal{A} \to \mathcal{M}\) such that
\(D_{x,y} (x) = \Delta(x)\) and \(D_{x,y}(y) = \Delta(y).\)

 Afterwards  local derivations and 2-local derivations have
been investigated by many authors on different algebras and many
results have been obtained in \cite{Ali,  AyuKuday2014, AK2016JP,
AKA,  Kad, KimKim04,  Semrl97}.

Recall that an algebra \(\mathcal{A}\) is called a regular (in the
sense of von Neumann) if for each \(a\in \mathcal{A}\) there
exists \(b\in\mathcal{A}\) such that \(a = aba.\) Let
\(M_n(\mathcal{A})\) be the algebra of all \(n \times n\) matrices
over a unital commutative regular algebra \(\mathcal{A}.\) In
\cite{AKA}, we prove that every 2-local derivation on
\(M_n(\mathcal{A}),\) \(n \geq 2,\) is a derivation.   We applied
this result to a description of 2-local derivations on the
algebras of measurable operators \(S(M)\) and locally measurable
operators \(LS(M)\) affiliated with a type I finite von Neumann
algebra \(M\). Further this result was extended to type
I\(_\infty\) von Neumann algebras: it was  proved that in this case 
every 2-local derivations on the algebra of locally measurable
operators is a derivation (see \cite[Theorem 6,7]{AK2016}).
Moreover in \cite{AKA} we also gave necessary and sufficient
conditions for a commutative regular algebra, in particular for
the algebra \(S(M)\) of measurable operators affiliated with an
abelian von Neumann algebra \(M\), to admit 2-local derivations
which are not derivations. In \cite{AK2016JP} we considered  a
unital semi-prime Banach algebra $\mathcal{A}$ with the inner
derivation property and proved that any 2-local derivation on the
algebra $M_{2^n}(\mathcal{A}),$ $n\geq 2,$ is a derivation. We
have applied this result to $AW^\ast$-algebras and proved that any
2-local derivation on an arbitrary $AW^\ast$-algebra is a
derivation. In \cite{HLQ}, W.~Huang, J.~Li and W.~Qian, have
characterized derivations and 2-local derivations from
\(M_n(\mathcal{A})\) into \(M_n(\mathcal{M}), n \geq 2,\) where
\(\mathcal{A}\) is a unital algebra over \(\mathbb{C}\) and
\(\mathcal{M}\) is a unital \(\mathcal{A}\)-bimodule.  They
considered   a unital Banach algebra such that any Jordan
derivation from the  algebra \(\mathcal{A}\) into any
\(\mathcal{A}\)-bimodule \(\mathcal{M}\) is an inner  derivation
and proved that any 2-local derivation from the algebra
$M_n(\mathcal{A})$ into $M_n(\mathcal{M})$ $(n\geq 3)$ is a
derivation, when \(\mathcal{A}\) is  commutative and commutes with
\(\mathcal{M}.\)

In the present paper we shall  consider  matrix
algebras over unital (non commutative in general)  Banach algebras and describe
 2-local derivations from \(M_n(\mathcal{A})\) into
\(M_n(\mathcal{M})\),  where  \(\mathcal{A}\) is  a unital
Banach algebra  such that any Jordan derivation
from the  algebra \(\mathcal{A}\) into any \(\mathcal{A}\)-bimodule
\(\mathcal{M}\) is a derivation. The main result of Section 2 asserts  that
under the above conditions every  2-local
derivation from the algebra $M_n(\mathcal{A})$ into
$M_n(\mathcal{M})$ $(n\geq 3)$ is a derivation.

In Section 3, we apply the main result of the previous section to
algebras of locally measurable operators affiliated with  von
Neumann algebras. Namely, we  extend all above mentioned results
from \cite{AK2016JP,  AK2016, AKA, HLQ} and prove that for an
arbitrary  von Neumann algebra \(M\)  without  abelian direct
summands
 every  2-local derivation on each  subalgebra
\(\mathcal{A}\) of the  algebra
 \(LS(M)\),  such that  \(M\subseteq\mathcal{A},\) is a derivation.
A similar result for local derivation is obtained in \cite[Theorem
1]{HLLM} (see also \cite[Theorem 5.5]{AK2016}).
\medskip

\section{2-local derivations on matrix algebras}

\medskip

If  $\Delta :\mathcal{A}\rightarrow \mathcal{M}$ is  a 2-local
derivation, then from the definition it easily follows that
$\Delta$ is homogenous. At the same time,
\begin{equation*}\label{joor}
 \Delta(x^2)=\Delta(x)x+x\Delta(x)
\end{equation*}
for each $x\in \mathcal{A}.$ This means that additive (and hence, linear) 2-local
derivation is a Jordan derivation.

In \cite{Bre2005} Bre\v{s}ar suggested various  conditions
on an algebra \(\mathcal{A}\) under which any Jordan derivation
from \(\mathcal{A}\) into any \(\mathcal{A}\)-bimodule
\(\mathcal{M}\) is a derivation.

In the present paper we shall consider algebras with the following
property:

\textbf{(J)}: \textit{any Jordan derivation from the algebra
\(\mathcal{A}\) into any \(\mathcal{A}\)-bimodule \(\mathcal{M}\)
is a derivation.}

Therefore, in the  case of  algebras with the property \textbf{(J)} in
order to prove that a 2-local derivation $\Delta :
\mathcal{A}\rightarrow \mathcal{M}$ is a derivation it is
sufficient to prove that $\Delta: \mathcal{A} \rightarrow
\mathcal{M}$ is additive.

Throughout this paper, \(\mathcal{A}\) is a unital Banach algebra
over \(\mathbb{C},\) \(\mathcal{M}\) is an
\(\mathcal{A}\)-bimodule with \(\mathbf{1} x = x \mathbf{1} = x\)
for all \(x\in \mathcal{M},\) where \(\mathbf{1}\) is the unit
element  of \(\mathcal{A}.\)

The following theorem is the main result of this section.

\begin{theorem}\label{mainlocal}
Let $\mathcal{A}$ be  a unital  Banach algebra with the property
\textbf{(J)}, $\mathcal{M}$ be a unital $\mathcal{A}$-bimodule and
let $M_{n}(\mathcal{A})$ be the algebra of all $n \times
n$-matrices over $\mathcal{A},$ where \(n\geq 3.\) Then any
2-local derivation $\Delta$ from $M_{n}(\mathcal{A})$ into
$M_{n}(\mathcal{M})$ is a derivation.
\end{theorem}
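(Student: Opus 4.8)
The plan is to reduce the statement to the additivity of $\Delta$. A $2$-local derivation is always homogeneous and satisfies $\Delta(x^{2})=\Delta(x)x+x\Delta(x)$, so once $\Delta$ is known to be additive it is a linear Jordan derivation of $M_{n}(\mathcal A)$ into $M_{n}(\mathcal M)$; and $M_{n}(\mathcal A)$ inherits property \textbf{(J)} from $\mathcal A$, since the classical matrix-unit reduction shows that every Jordan derivation of $M_{n}(\mathcal A)$ into $M_{n}(\mathcal M)$ is the sum of an inner derivation and the entry-wise extension of a Jordan derivation of $\mathcal A$ into $\mathcal M$, the latter being a derivation by \textbf{(J)}. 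Hence, by the remark preceding the theorem, it suffices to prove that $\Delta$ is additive. Throughout I would use the description of derivations of $M_{n}(\mathcal A)$ into $M_{n}(\mathcal M)$ (cf.\ \cite{HLQ}): $D$ is a derivation if and only if $D(x)=ax-xa+\widehat{\delta}(x)$ for some $a\in M_{n}(\mathcal M)$ and some derivation $\delta\colon\mathcal A\to\mathcal M$, where $\widehat{\delta}\big((x_{ij})\big)=\big(\delta(x_{ij})\big)$; write $D_{a}$ for the inner part, so $D=D_{a}+\widehat{\delta}$.

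First I would normalise $\Delta$. Set $d=\sum_{k=1}^{n}k\,e_{kk}$. For each $x$ fix a derivation $D_{d,x}$ with $D_{d,x}(d)=\Delta(d)$ and $D_{d,x}(x)=\Delta(x)$, and write $D_{d,x}(y)=a_{x}y-ya_{x}+\widehat{\delta_{x}}(y)$. Since $d$ has scalar entries, $\widehat{\delta_{x}}(d)=0$, so $\Delta(d)=a_{x}d-da_{x}$, whose $(i,j)$-entry equals $(j-i)(a_{x})_{ij}$; thus the off-diagonal part of $a_{x}$ does not depend on $x$, and subtracting the corresponding inner derivation we may assume $\Delta(d)=0$ and that each $a_{x}$ is diagonal, $a_{x}=\sum_{k}c^{(x)}_{k}e_{kk}$ with $c^{(x)}_{k}\in\mathcal M$. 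A direct computation now gives $\Delta(x)_{ij}=c^{(x)}_{i}x_{ij}-x_{ij}c^{(x)}_{j}+\delta_{x}(x_{ij})$ for every $x=(x_{ij})$. In particular $\Delta(b\,e_{ij})\in\mathcal M\,e_{ij}$ for all $i,j$ and $b\in\mathcal A$, so $\Delta$ induces maps $\Delta_{ij}\colon\mathcal A\to\mathcal M$ by $\Delta(b\,e_{ij})=\Delta_{ij}(b)\,e_{ij}$; each diagonal $\Delta_{ii}$ is then a $2$-local derivation of $\mathcal A$ into $\mathcal M$ and, by the Jordan identity, satisfies $\Delta_{ii}(b^{2})=\Delta_{ii}(b)b+b\Delta_{ii}(b)$.

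The heart of the proof is to show that the data $c^{(x)}_{k}$ and $\delta_{x}$ can be chosen independent of $x$ (up to the unavoidable ambiguity $a\mapsto a+m\mathbf 1$, $\delta\mapsto\delta-[m,\,\cdot\,]$ in the decomposition $D=D_{a}+\widehat{\delta}$), from which additivity of $\Delta$ follows. This is where the hypothesis $n\geq 3$ enters decisively: for pairwise distinct $i,j,k$ one has $e_{ij}=e_{ik}e_{kj}$ and $(b\,e_{ik}+e_{kj})^{2}=b\,e_{ij}$, and by evaluating $\Delta$ on suitable $2$-local pairs built from the matrix units $e_{ik},\,e_{kj},\,e_{ij}$ (whose coefficient is the unit of $\mathcal A$) and comparing with the structure theorem, one forces the various diagonal matrices $a_{x}$ to be mutually compatible and each $\Delta_{ij}$ to be additive, in fact of the form $\Delta_{ij}(b)=v_{i}b-bv_{j}+\delta(b)$ for fixed $v_{1},\dots,v_{n}\in\mathcal M$ and a fixed derivation $\delta\colon\mathcal A\to\mathcal M$. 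Granting this, additivity of $\Delta$ is obtained in two steps: additivity within a corner, $\Delta\big((a+b)e_{ij}\big)=\Delta(a\,e_{ij})+\Delta(b\,e_{ij})$, is immediate from additivity of $\Delta_{ij}$; and additivity across corners, $\Delta\big(\sum_{i,j}a_{ij}e_{ij}\big)=\sum_{i,j}\Delta(a_{ij}e_{ij})$, follows by applying the entry-wise formula of the previous paragraph, with reference element $d$ and small perturbations of it, to identify the $(i,j)$-entry of $\Delta(x)$ with $\Delta_{ij}(x_{ij})$. Combining these, $\Delta$ is additive, hence a derivation.

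The step I expect to be the main obstacle is precisely this rigidification: because the decomposition $D=D_{a}+\widehat{\delta}$ of a derivation of $M_{n}(\mathcal A)$ is only ``essentially'' unique, extracting genuinely $x$-independent data — and in particular checking that the pieces $\delta_{x}\colon\mathcal A\to\mathcal M$ assemble into a single derivation of $\mathcal A$ — demands careful bookkeeping with several $2$-local pairs, and it is here that $n\geq 3$ cannot be weakened. A secondary difficulty, by contrast with \cite{AK2016JP,HLQ}, is that derivations of $\mathcal A$ need not be inner, so the $\delta_{x}$-parts cannot be absorbed into inner derivations as in those works and must be carried along, using only that a derivation of $\mathcal A$ is additive.
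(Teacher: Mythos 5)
Your reduction to additivity is legitimate in principle (indeed $M_n(\mathcal A)$, $n\geq 2$, has property \textbf{(J)} because it contains $M_n(\mathbb C)$ unitally, by Bre\v{s}ar's result the paper cites in Section 3), and your normalisation with the diagonal reference element $d=\sum_k k\,e_{kk}$ is correct: it yields, for each $x$, a diagonal implementing matrix $a_x$ and the formula $\Delta(x)_{ij}=c^{(x)}_i x_{ij}-x_{ij}c^{(x)}_j+\delta_x(x_{ij})$, which parallels the paper's first normalisation with $u=\sum_i 2^{-i}e_{ii}$. But the entire substance of the theorem lies in the step you only assert: that the $x$-dependent data $(c^{(x)}_k,\delta_x)$ can be replaced by fixed data, equivalently that each $\Delta_{ij}$ is additive and that $\Delta(x)_{ij}=\Delta_{ij}(x_{ij})$ for arbitrary (not corner, not diagonal) $x$. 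You say this is ``forced by evaluating $\Delta$ on suitable $2$-local pairs,'' but you exhibit no such pairs and no argument; the identities $e_{ij}=e_{ik}e_{kj}$ and $(b\,e_{ik}+e_{kj})^2=b\,e_{ij}$ do not by themselves control the ambiguity, because the derivation $D_{x,y}$ changes with every pair and the single reference $d$ pins down the inner part only up to an arbitrary diagonal matrix while leaving $\delta_x$ completely free. This is a genuine gap, and you yourself flag it as the main obstacle.

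The paper closes this gap with two devices absent from your sketch. First, a second reference element, the nilpotent shift $v=\sum_{i=2}^n e_{i-1,i}$: any matrix commuting with $v$ is upper triangular Toeplitz, in particular has constant diagonal. Pairing with $v$ a single diagonal element $z$ whose $(i,i)$-, $(k,k)$-, $(s,s)$-entries are $x+y$, $x$, $y$ (this is exactly where $n\geq 3$ is used) produces one implementing pair $(a,\delta)$ whose diagonal entries all equal one element $a_1$, and transport lemmas ($e_{k,k}\Delta(z)e_{k,k}=\Delta(z_{k,k})$ and $e_{j,i}\Delta(x)e_{i,j}=\Delta(e_{j,i}xe_{i,j})$) then give $\Delta_{ii}(x+y)=\Delta_{ii}(x)+\Delta_{ii}(y)$ with the same $a_1$ and $\delta$ appearing in all three terms; property \textbf{(J)} of $\mathcal A$ is invoked only here, to make $\Delta_{ii}$ a derivation. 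Second, for a general matrix $x$ the paper does not prove additivity directly: it subtracts the derivation $\overline{\Delta_{11}}$ and shows the difference vanishes entry by entry, using Banach-algebraic arguments your proposal never invokes --- homogeneity plus $\|x_{k,k}\|<1$ to make $e+x_{k,k}$ invertible (killing diagonal entries), and distinct scalars $\lambda_i$ with $|\lambda_i|>\|x_{s,k}\|$ (killing off-diagonal entries), together with the observation that $\Delta(x)_{k,s}$ depends only on the entries of $x$ other than $(s,k)$. Since you supply neither the shift-type second reference nor any substitute for these norm arguments, the rigidification step, and with it the proof, is missing.
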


The proof  of Theorem~\ref{mainlocal} consists of two steps. In
the  first step  we shall show additivity of  $\Delta$ on the
subalgebra of diagonal matrices from $M_{n}(\mathcal{A}).$

Let $\{e_{i,j}\}_{i,j=1}^n$ be  the system of matrix units in
$M_n(\mathcal{A}).$ For $x \in M_n(\mathcal{A})$ by $x_{i,j}$ we
denote the $(i, j)$-entry of $x,$ where $1 \leq i, j \leq n.$ We
shall, if necessary, identify this element with the matrix from
$M_n(\mathcal{A})$ whose $(i,j)$-entry is $x_{i,j},$ other entries
are zero, i.e. $x_{i,j}=e_{i,i}xe_{j,j}.$

Each element \(x\in  M_n(\mathcal{A})\)  has the form
\[
x =\sum\limits_{i , j = 1}^n  x_{ij} e_{ij},\,\,  x_{ij}\in
\mathcal{A}, i , j \in  \overline{1, n}. \]

Let \(\delta : \mathcal{A} \to \mathcal{M}\) be a derivation.
Setting
\begin{equation}\label{cender}
\overline{\delta}(x) =\sum\limits_{i , j = 1}^n  \delta(x_{ij})
e_{ij},\,\, x_{ij}\in \mathcal{A}, i , j \in  \overline{1, n}
\end{equation}
we obtain a well-defined linear operator \(\overline{\delta}\)
from \(M_n(\mathcal{A})\)  into \(M_n(\mathcal{M}).\) Moreover
\(\overline{\delta}\) is a derivation from \(M_n(\mathcal{A})\)
into \(M_n(\mathcal{M}).\)

It is known \cite[Theorem 2.1]{HLQ} that every derivation \(D\)
from \(M_n(\mathcal{A})\) into \(M_n(\mathcal{M})\) can be
represented as a sum
\begin{equation}\label{decompos}
D = ad(a) + \overline{\delta},
\end{equation}
where \(\textrm{ad}(a)\) is an inner derivation implemented by an
element \(a \in  M_n(\mathcal{M}),\) while \(\overline{\delta}\)
is the derivation of the form \eqref{cender}  generated by a
derivation \(\delta\) from \(\mathcal{A}\) into \(\mathcal{M}.\)

Consider the following two matrices:
\begin{equation}\label{vv}
 u=\sum\limits_{i=1}^n \frac{1}{2^i}e_{i,i},\,
v=\sum\limits_{i=2}^n e_{i-1,i}.
\end{equation}

It is easy to see that an element $x \in M_n(\mathcal{M})$
commutes with $u$  if and only if it is diagonal, and if an
element  $a \in M_n(\mathcal{M})$ commutes with $v,$ then $a$ is
of the form
\begin{equation}\label{trian}
a=\left( \begin{array}{ccccccc}
a_1   & a_2   & a_3 & . & \ldots & a_n \\
0     & a_1   & a_2 & . & \ldots & a_{n-1}\\
0     & 0     & a_1 & . & \ldots & a_{n-2}\\
\vdots& \vdots& \vdots & \vdots  &\vdots & \vdots\\
0 & 0 &\ldots & . & a_1 & a_2\\
0 & 0 &\ldots & .& 0 & a_1
\end{array} \right).
\end{equation}

A  result, similar to the following one,  was proved in
\cite[Lemma 4.4]{AKA} for matrix algebras over commutative regular
algebras.

Further in Lemmata~\ref{lemmatwo}--\ref{lemmafive} we assume that
$n\geq 2.$

\begin{lemma}\label{lemmatwo}
For every $2$-local derivation $\Delta$ from $M_n(\mathcal{A})$
into $M_n(\mathcal{M})$ there exists a derivation  $D$   such that
$\Delta|_{\mbox{sp}\{e_{i,j}\}_{i,j=1}^n}=D|_{\mbox{sp}\{e_{i,j}\}_{i,j=1}^n},$
where $\mbox{sp}\{e_{i,j}\}_{i,j=1}^n$ is the linear span of the
set $\{e_{i,j}\}_{i,j=1}^n.$
\end{lemma}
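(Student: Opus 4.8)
The plan is to produce, using the 2-local derivation evaluated at a single cleverly chosen pair of matrices, one honest derivation $D$ that agrees with $\Delta$ on every matrix unit $e_{i,j}$ simultaneously; linearity of $D$ then gives agreement on the whole span. First I would apply the 2-local property to the pair $(u,v)$ from \eqref{vv}: there is a derivation $D = \mathrm{ad}(a) + \overline{\delta}$ (using the decomposition \eqref{decompos}) with $D(u) = \Delta(u)$ and $D(v) = \Delta(v)$. After subtracting $D$ from $\Delta$, I may assume $\Delta(u) = \Delta(v) = 0$ and aim to show $\Delta$ vanishes on all $e_{i,j}$. Because $\Delta(u) = 0$ and $u$ has the special diagonal form, the representation of the ``witness'' derivation for any pair $(u,x)$ forces its inner part to be implemented by a diagonal element (it must commute with $u$ since $\Delta(u)=0$ and, after subtracting its own $\overline{\delta}$-part which kills $u$, the inner part annihilates $u$); likewise $\Delta(v)=0$ forces the inner part of any witness for $(v,x)$ to be implemented by a matrix of the upper-triangular Toeplitz form \eqref{trian}.

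Next I would extract $\Delta(e_{i,j})$ for each pair $(i,j)$ by choosing the second argument $y$ of the 2-local property to be $e_{i,j}$ (or a small combination of matrix units) and using whichever of $u$ or $v$ is most convenient as the first argument. For the diagonal units $e_{i,i}$: apply the 2-local property to $(u, e_{i,i})$, so there is a derivation $D_{u,e_{i,i}} = \mathrm{ad}(b) + \overline{\delta_0}$ with $D_{u,e_{i,i}}(u) = 0$, which as above forces $b$ diagonal; then $\mathrm{ad}(b)(e_{i,i}) = 0$ and $\overline{\delta_0}(e_{i,i}) = \sum \delta_0((e_{i,i})_{k\ell}) e_{k\ell} = 0$ since the entries of $e_{i,i}$ are $0$ or $\mathbf 1$ and $\delta_0(\mathbf 1) = 0$ for any derivation. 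Hence $\Delta(e_{i,i}) = 0$. For an off-diagonal unit $e_{i,j}$ with $i\neq j$: I would use the pair $(v, e_{i,j})$ when $j = i+1$ (so $e_{i,j}$ appears as a summand of $v$ and vanishes directly), and for general $i \neq j$ reduce to this by a further application of the 2-local property to a pair such as $(v + e_{i,j},\, e_{i,j})$ or by exploiting that conjugating $v$ by a diagonal invertible matrix still commutes-constrains the inner part; in all cases the same two facts — the inner part is forced into a commutant form that annihilates the relevant unit, and $\overline{\delta_0}$ annihilates any matrix whose entries lie in $\{0,\mathbf 1\}$ — give $\Delta(e_{i,j}) = 0$.

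Finally, since $\Delta$ is homogeneous and we have shown it vanishes on each $e_{i,j}$ (after the reduction), and since $D$ is genuinely linear and was constructed to agree with $\Delta$ at $u$ and $v$, I would at the last step verify agreement on all of $\mathrm{sp}\{e_{i,j}\}$: the subtracted map $\Delta - D$ is again 2-local, is homogeneous, and kills every $e_{i,j}$, so it kills every linear combination — but one must be slightly careful, since a 2-local derivation is not a priori additive, so the argument really needs the vanishing at each $e_{i,j}$ individually together with the fact that a 2-local derivation which is zero at a single element is pinned down there regardless of additivity; this is exactly what the pairwise witnesses provide. The main obstacle I expect is the off-diagonal case $i \neq j$ with $|i-j|$ large: the Toeplitz form \eqref{trian} forced by $\Delta(v)=0$ does \emph{not} by itself make $\mathrm{ad}(a)(e_{i,j})$ vanish, so one genuinely needs a second constraint — most cleanly, combine the $v$-constraint (inner part Toeplitz) with a $u$-type or modified-$v$-type constraint from another 2-local witness for an overlapping pair, and intersect the two commutant conditions; arranging these pairs so their intersection isolates a single entry $\delta_0((e_{i,j})_{k\ell})$, which is then $\delta_0(\mathbf 1)=0$, is the delicate bookkeeping step.
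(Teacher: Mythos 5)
Your reduction (subtracting a witness derivation for the pair $(u,v)$ so that $\Delta(u)=\Delta(v)=0$) and your treatment of the diagonal units $e_{i,i}$ are correct and match the paper, but the off-diagonal case is genuinely open in your write-up, and the fallbacks you suggest do not work. The pair $(v+e_{i,j},\,e_{i,j})$ gives nothing, because after the reduction you only know $\Delta(v)=0$; since $\Delta$ is not additive you have no information about $\Delta(v+e_{i,j})$, and likewise $\Delta$ of a conjugate of $v$ is unknown. Nor can you ``intersect the two commutant conditions'': the pairs $(e_{i,j},u)$ and $(e_{i,j},v)$ have \emph{different} witness derivations, so no single implementing element is forced to be simultaneously diagonal and of the Toeplitz form \eqref{trian}. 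What does work --- and is the paper's argument --- is to equate the two expressions for the single value $\Delta(e_{i,j})$: the witness for $(e_{i,j},u)$ has diagonal inner part $h$ and $\overline{\delta}(e_{i,j})=0$, so $\Delta(e_{i,j})=(h_i-h_j)e_{i,j}$ is supported only in the $(i,j)$-entry; the witness for $(e_{i,j},v)$ has Toeplitz inner part $b$, and the $(i,j)$-entry of $[b,e_{i,j}]$ is $b_{i,i}-b_{j,j}=0$. Comparing the two gives $\Delta(e_{i,j})=0$ for all $i,j$ (diagonal units included), with no delicate bookkeeping.

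There is a second gap: the lemma asserts agreement on all of $\mathrm{sp}\{e_{i,j}\}_{i,j=1}^n$, and since $\Delta-D$ is not known to be additive, vanishing at each $e_{i,j}$ does not by itself give vanishing at a combination $x=\sum_{i,j}\lambda_{i,j}e_{i,j}$; your closing remark that ``a 2-local derivation which is zero at a single element is pinned down there'' is not an argument for this. The paper closes it as follows: for each $(i,j)$ take the witness $D_{e_{i,j},x}$ for the pair $(e_{i,j},x)$ and compute, using the Leibniz rule and $e_{i,j}xe_{i,j}=\lambda_{j,i}e_{i,j}$, that $e_{i,j}\Delta(x)e_{i,j}=D_{e_{i,j},x}(e_{i,j}xe_{i,j})-D_{e_{i,j},x}(e_{i,j})xe_{i,j}-e_{i,j}xD_{e_{i,j},x}(e_{i,j})=\lambda_{j,i}\Delta(e_{i,j})=0$, since $D_{e_{i,j},x}(e_{i,j})=\Delta(e_{i,j})=0$. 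As this holds for all $i,j$, every entry of $\Delta(x)$ vanishes, so $\Delta=D$ on the whole span. You need this (or an equivalent) final step; without it, and without the corrected off-diagonal argument above, the proposal does not prove the lemma.
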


\begin{proof}  Take a derivation \(D\) from  \(M_n(\mathcal{A})\) into $M_n(\mathcal{M})$ such that
$$
\Delta(u)=D(u),\, \Delta(v)=D(v),
$$
where $u, v$ are the elements from~\eqref{vv}. Replacing $\Delta$
by $\Delta-D$, if necessary, we can assume that
$\Delta(u)=\Delta(v)=0.$

Let $i, j\in \overline{1, n}.$ Take a derivation \(D =
\textrm{ad}(h) + \overline{\delta}\) of the form \eqref{decompos}
such that
$$
\Delta(e_{i,j})=[h,
e_{i,j}]+\overline{\delta}(e_{ij}),\,\Delta(u)=[h,
u]+\overline{\delta}(u).
$$
Since $\Delta(u)=0$ and \(\overline{\delta}(u)=0,\)   it follows
that \([h, u]=0,\)  and therefore \(h\) has a diagonal form, i.e.
$h=\sum\limits_{s=1}^n h_{s}e_{s,s},\, h_s\in \mathcal{A},\, s\in
\overline{1, n}.$

In the same way, but starting with the element $v$ instead of $u$,
we obtain
$$
\Delta(e_{i,j})=be_{i,j}-e_{i,j}b,
$$
where  $b$  has the  form~\eqref{trian}, depending on $e_{i,j}.$
So
$$
\Delta(e_{i,j})=he_{i,j}-e_{i,j} h=b e_{i,j}-e_{i,j} b.
$$
Since
$$
he_{i,j} - e_{i,j} h= (h_{i}-h_{j})e_{i,j}
$$
 and
 $$
[b e_{i,j}- e_{i,j} b]_{i,j}=0,
$$
 it follows that $\Delta(e_{i,j})=0.$

Now let us take a matrix $x=\sum\limits_{i,j=1}^n
\lambda_{i,j}e_{i,j}\in M_n(\mathbb{C}).$ Then
\begin{eqnarray*}
e_{i,j}\Delta(x)e_{i,j} & = &
e_{i,j} D_{e_{i,j}, x} (x) e_{i,j} =\\
& = & D_{e_{i,j}, x} (e_{i,j} x e_{i,j})-D_{e_{i,j}, x}
(e_{i,j})x e_{i,j}-e_{i,j} x D_{e_{i,j}, x} (e_{i,j})=\\
& = & D_{e_{i,j}, x} (\lambda_{j,i}  e_{i,j})-\Delta(e_{i,j})x e_{i,j}-e_{i,j} x \Delta(e_{i,j})=\\
&=& \lambda_{j,i}  D_{e_{i,j}, x} (e_{i,j}) -0-0=\lambda_{j,i}
\Delta(e_{i,j}) =0,
\end{eqnarray*}
i.e. $ e_{i,j}\Delta(x)e_{i,j}=0$ for all $i,j\in \overline{1,n}.$
This means that $\Delta(x)=0.$ The proof is complete. \end{proof}

Further in Lemmata~\ref{three}--\ref{adj} we assume that $\Delta$
is a 2-local derivation from  $M_n(\mathcal{A})$ into
$M_n(\mathcal{M})$ such that
$\Delta|_{\mbox{sp}\{e_{i,j}\}_{i,j=1}^n}= 0.$

Let $\Delta_{i,j}$ be the restriction of $\Delta$ onto
$\mathcal{A}_{i,j}=e_{i,i}M_n(\mathcal{A})e_{j,j},$ where $1 \leq
i, j \leq n.$

\begin{lemma}\label{three}
$\Delta_{i,j}$ maps $\mathcal{A}_{i,j}$ into itself.
\end{lemma}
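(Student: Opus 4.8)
The plan is to fix $x\in\mathcal{A}_{i,j}$ and write $x=c\,e_{i,j}$ with $c\in\mathcal{A}$, i.e. $x=e_{i,i}xe_{j,j}$ and $x_{i,j}=c$. The goal is to show that $\Delta(x)$ again sits in the $(i,j)$-corner $e_{i,i}M_n(\mathcal{M})e_{j,j}$. To this end I would apply the $2$-local property to the pair $(x,u)$, where $u=\sum_{k=1}^n 2^{-k}e_{k,k}$ is the diagonal matrix from \eqref{vv}. Since $u$ lies in the complex-linear span $\mathrm{sp}\{e_{i,j}\}_{i,j=1}^n=M_n(\mathbb{C})$, the standing hypothesis of Lemmata~\ref{three}--\ref{adj} gives $\Delta(u)=0$; hence there is a derivation $D\colon M_n(\mathcal{A})\to M_n(\mathcal{M})$ with $D(x)=\Delta(x)$ and $D(u)=0$.

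Next I would invoke the structure theorem \eqref{decompos} to write $D=\mathrm{ad}(a)+\overline{\delta}$ with $a\in M_n(\mathcal{M})$ and $\delta$ a derivation of $\mathcal{A}$ into $\mathcal{M}$. Because $u$ has scalar entries, $\overline{\delta}(u)=0$ (as $\delta(\mathbf 1)=0$), so $0=D(u)=[a,u]$. As recorded right after \eqref{vv}, an element of $M_n(\mathcal{M})$ commuting with $u$ must be diagonal, so $a=\sum_{s=1}^n a_s e_{s,s}$ with $a_s\in\mathcal{M}$.

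The conclusion is then immediate from a one-line computation: by \eqref{cender} we have $\overline{\delta}(x)=\delta(c)\,e_{i,j}$, and $[a,x]=a_i c\,e_{i,j}-c a_j\,e_{i,j}=(a_i c-c a_j)\,e_{i,j}$, whence
$$
\Delta(x)=D(x)=\bigl(a_i c-c a_j+\delta(c)\bigr)e_{i,j}\in e_{i,i}M_n(\mathcal{M})e_{j,j}.
$$
Since $i,j$ were arbitrary and nothing specific to them was used, this shows $\Delta_{i,j}$ maps $\mathcal{A}_{i,j}$ into itself (identifying $\mathcal{A}_{i,j}$ with the corresponding corner of $M_n(\mathcal{M})$). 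There is no genuine obstacle in this lemma; the only point requiring a little care is that the relevant span is the \emph{complex}-linear span $M_n(\mathbb{C})$, which is exactly what makes $u$ belong to it and makes $\overline{\delta}$ vanish on $u$, so that $[a,u]=0$ and the reduction of $a$ to diagonal form goes through.
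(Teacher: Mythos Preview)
Your proof is correct and follows essentially the same approach as the paper: apply the $2$-local property to the pair $(x,u)$, use the decomposition \eqref{decompos} together with $\Delta(u)=\overline{\delta}(u)=0$ to force the inner part to be implemented by a diagonal element, and then read off that $\Delta(x)$ lies in the $(i,j)$-corner. Your final formula $(a_i c - c a_j + \delta(c))e_{i,j}$ is in fact more accurate than the paper's displayed $(h_i-h_j)e_{i,j}$, which drops the factor $x_{i,j}$.
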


\begin{proof}  Let us show that
\begin{equation}\label{compo}
\Delta_{i,j}(x) =e_{i,i}  \Delta(x) e_{j,j}
\end{equation}
for all $x\in \mathcal{A}_{i,j}.$

Take $x=x_{i,j}\in \mathcal{A}_{i,j},$  and consider a derivation \(D =
\textrm{ad}(h) + \overline{\delta}\) of the form \eqref{decompos}
such that
$$
\Delta(x)=[h, x]+\overline{\delta}(x),\,\Delta(u)=[h,
u]+\overline{\delta}(u),
$$
where \(u\) is the element from \eqref{vv}. Since $\Delta(u)=0$
and \(\overline{\delta}(u)=0,\) it follows that \([h, u]=0,\)  and
therefore \(h\) has a diagonal form. Then \(\Delta(x) =
(h_{i}-h_{j})e_{i j} +\delta(x_{i j})e_{i j}.\) This means that
\(\Delta(x) \in \mathcal{A}_{i,j}.\) The proof is complete.
\end{proof}

\begin{lemma}\label{lemmafour} Let
$x=\sum\limits_{i=1}^n x_{i,i}$ be a diagonal matrix. Then
\begin{equation}\label{kkkk}
e_{k,k}\Delta(x)e_{k,k}=\Delta(x_{k,k})
\end{equation} for all $k\in
\overline{1,n}.$
\end{lemma}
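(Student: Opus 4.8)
The plan is to apply the $2$-local hypothesis to a single, carefully chosen pair and then extract the identity by evaluating the resulting derivation on a corner in two different ways. Fix $k\in\overline{1,n}$ and let $X$ denote the matrix $x_{k,k}=e_{k,k}xe_{k,k}$. Since $x$ is diagonal one has $xe_{k,k}=e_{k,k}x=X$, and of course $Xe_{k,k}=e_{k,k}X=X$. First I would apply the hypothesis to the pair $\{x,X\}$ to obtain a derivation $D$ from $M_n(\mathcal{A})$ into $M_n(\mathcal{M})$ with $D(x)=\Delta(x)$ and $D(X)=\Delta(X)$.

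The key step is to expand $D(X)$ twice via the Leibniz rule for a product of three factors. Writing $X=e_{k,k}\,x\,e_{k,k}$ and using $xe_{k,k}=e_{k,k}x=X$ gives
\[
D(X)=D(e_{k,k})X+e_{k,k}D(x)e_{k,k}+XD(e_{k,k}),
\]
while writing $X=e_{k,k}\,X\,e_{k,k}$ and using $Xe_{k,k}=e_{k,k}X=X$ gives
\[
D(X)=D(e_{k,k})X+e_{k,k}D(X)e_{k,k}+XD(e_{k,k}).
\]
Subtracting, the terms containing $D(e_{k,k})$ — over which we have no control — cancel, leaving $e_{k,k}D(x)e_{k,k}=e_{k,k}D(X)e_{k,k}$, that is, $e_{k,k}\Delta(x)e_{k,k}=e_{k,k}\Delta(X)e_{k,k}$.

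To finish I would invoke Lemma~\ref{three}: since $X\in\mathcal{A}_{k,k}$, its image $\Delta(X)=\Delta_{k,k}(X)$ again lies in $\mathcal{A}_{k,k}$, hence $e_{k,k}\Delta(X)e_{k,k}=\Delta(X)=\Delta(x_{k,k})$, which yields \eqref{kkkk}.

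There is no real obstacle here, only one idea on which the argument hinges. The natural temptation is to use a pair containing $e_{k,k}$ so as to exploit $\Delta(e_{k,k})=0$; but then the $2$-local hypothesis gives no information about $\Delta$ at $x_{k,k}$, which is exactly the value being compared. Choosing the pair $\{x,x_{k,k}\}$ instead and differencing the two expansions of $D(x_{k,k})$ is precisely what forces the uncontrollable $D(e_{k,k})$-terms to disappear on their own, after which Lemma~\ref{three} supplies the final localization.
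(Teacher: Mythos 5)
Your proof is correct. You test the $2$-local hypothesis on the same pair as the paper, namely $(x,\,x_{k,k})$, and you finish exactly as the paper does, by invoking Lemma~\ref{three} (equation \eqref{compo}) to localize $\Delta(x_{k,k})$ into the $(k,k)$-corner. The difference lies in the middle step: the paper extracts $e_{k,k}\Delta(x)e_{k,k}=e_{k,k}\Delta(x_{k,k})e_{k,k}$ by writing the common derivation in the structural form $D=\mathrm{ad}(a)+\overline{\delta}$ of \eqref{decompos} and computing both corners entrywise to get $[a_{k,k},x_{k,k}]+\delta(x_{k,k})$ on each side, whereas you avoid the decomposition altogether and obtain the same corner identity from two Leibniz expansions of $D(e_{k,k}xe_{k,k})$ and $D(e_{k,k}x_{k,k}e_{k,k})$, in which the uncontrolled $D(e_{k,k})$-terms cancel (here the diagonality of $x$, i.e.\ $xe_{k,k}=e_{k,k}x=x_{k,k}$, is what makes the two outer terms literally coincide). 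Your route is slightly more elementary and self-contained: it uses only the idempotent-corner trick for an arbitrary derivation, with no appeal to \cite[Theorem 2.1]{HLQ}; the paper's route is a routine computation once \eqref{decompos} is available, and keeps the bookkeeping uniform with the surrounding lemmata, which also use that representation. Both arguments depend on Lemma~\ref{three} in the same way, so neither is logically lighter in terms of prerequisites beyond the decomposition.
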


\begin{proof}  Take a derivation \(D=\textrm{ad}(a)+\overline{\delta}\) of the form
\eqref{decompos}  such that
\begin{center}
\(\Delta(x)=[a, x]+\overline{\delta}(x)\) and
\(\Delta(x_{k,k})=[a, x_{k,k}]+\overline{\delta}(x_{kk}).\)
\end{center}
Using equality \eqref{compo}, we obtain that
\begin{eqnarray*}
\Delta(x_{k,k}) & = & e_{k,k}\Delta(x_{k,k})e_{k,k}=e_{k,k}[a,
x_{k,k}] e_{k,k} +e_{k,k}\overline{\delta}(x_{k,k}) e_{k,k}=\\
& = & [a_{k,k}, x_{k,k}]+\delta(x_{k,k}).
\end{eqnarray*}
Since  $x$ is a diagonal matrix, we get
\begin{eqnarray*}
e_{k,k}\Delta(x)e_{k,k} & = & e_{k,k}[a, x] e_{k,k}
+e_{kk}\overline{\delta}(x)e_{k,k} = [a_{k,k},
x_{k,k}]+\delta(x_{k,k}).
\end{eqnarray*}
Thus $e_{k,k}\Delta(x)e_{k,k}=\Delta(x_{k,k}).$ The proof is
complete.
\end{proof}

\begin{lemma}\label{lemmafive} Let  $x=x_{i,i}\in
\mathcal{A}_{i,i}.$ Then
\begin{equation}\label{jiji}
e_{j,i}\Delta(x)e_{i,j}=\Delta(e_{j,i}xe_{i,j})
\end{equation}
for every \(j\in \{1, \cdots, n\}.\)
\end{lemma}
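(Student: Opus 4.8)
The plan is to mimic the proof of Lemma~\ref{lemmafour}, replacing the diagonal matrix $x$ there with the single matrix unit block $x = x_{i,i}$ and the projection $e_{k,k}$ with the partial isometry implementing the transfer from position $(i,i)$ to position $(j,j)$. Concretely, fix $i$ and $j$ and choose, via the decomposition \eqref{decompos}, a derivation $D = \mathrm{ad}(a) + \overline{\delta}$ such that
\begin{equation*}
\Delta(x) = [a, x] + \overline{\delta}(x), \qquad \Delta(e_{j,i} x e_{i,j}) = [a, e_{j,i} x e_{i,j}] + \overline{\delta}(e_{j,i} x e_{i,j}).
\end{equation*}
Here $e_{j,i} x e_{i,j} = e_{j,i} x_{i,i} e_{i,j}$ is an element of $\mathcal{A}_{j,j}$, so by Lemma~\ref{three} (applied with indices $(j,j)$) we have $\Delta(e_{j,i} x e_{i,j}) \in \mathcal{A}_{j,j}$, and by equality \eqref{compo} it equals $e_{j,j} \Delta(e_{j,i} x e_{i,j}) e_{j,j}$.

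First I would compute the right-hand side of \eqref{jiji}: using \eqref{compo} and then the diagonal-compression formulas $e_{j,j}[a, y] e_{j,j} = [a_{j,j}, y]$ for $y \in \mathcal{A}_{j,j}$ and $e_{j,j}\overline{\delta}(y) e_{j,j} = \delta(y_{j,j})$, we get
\begin{equation*}
\Delta(e_{j,i} x e_{i,j}) = [a_{j,j},\, e_{j,i} x_{i,i} e_{i,j}] + \delta(x_{i,i}),
\end{equation*}
where I have used that the $(j,j)$-entry of the matrix $e_{j,i} x_{i,i} e_{i,j}$ is just $x_{i,i}$. Next I would compute the left-hand side: since $x = x_{i,i} \in \mathcal{A}_{i,i}$, Lemma~\ref{three} gives $\Delta(x) \in \mathcal{A}_{i,i}$, so $\Delta(x) = [a_{i,i}, x_{i,i}] e_{i,i}$-part plus $\delta(x_{i,i}) e_{i,i}$, i.e. $\Delta(x) = \bigl([a_{i,i}, x_{i,i}] + \delta(x_{i,i})\bigr) e_{i,i}$ using \eqref{compo} and the same compression formulas with index $i$. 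Then
\begin{equation*}
e_{j,i}\Delta(x)e_{i,j} = e_{j,i}\bigl([a_{i,i}, x_{i,i}] + \delta(x_{i,i})\bigr)e_{i,j} = e_{j,i}[a_{i,i}, x_{i,i}]e_{i,j} + \delta(x_{i,i}) e_{j,j}.
\end{equation*}

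The two expressions must now be matched term by term. The $\delta(x_{i,i})$ terms agree. For the remaining terms I would use that in the matrix $a$ the commutator blocks are related: multiplying out, $e_{j,i}[a_{i,i}, x_{i,i}]e_{i,j}$ as an element of $\mathcal{A}_{j,j}$ has $(j,j)$-entry $a_{i,i} x_{i,i} - x_{i,i} a_{i,i}$, whereas $[a_{j,j}, e_{j,i} x_{i,i} e_{i,j}]$ has $(j,j)$-entry $a_{j,j} x_{i,i} - x_{i,i} a_{j,j}$. These are equal if $a_{i,i} = a_{j,j}$, which need not hold for an arbitrary matrix $a$ — and this mismatch is the main obstacle. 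The resolution is the standard one: since $\Delta$ vanishes on all matrix units, in particular on $e_{i,j}$, we have $0 = \Delta(e_{i,j}) = [a, e_{i,j}] + \overline{\delta}(e_{i,j}) = [a, e_{i,j}]$ (as $\overline{\delta}$ kills the scalar matrix unit), which forces $a$ to commute with $e_{i,j}$, hence $a_{i,i} = a_{j,j}$ (and more, as in \eqref{trian}). Feeding this back makes the two computations coincide, establishing \eqref{jiji}. I would present it by first recording the consequence $[a, e_{i,j}] = 0 \Rightarrow a_{i,i} = a_{j,j}$, then doing the two one-line block computations and observing they agree.
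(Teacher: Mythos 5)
There is a genuine gap at the exact point you flag as ``the main obstacle.'' You choose the derivation $D=\mathrm{ad}(a)+\overline{\delta}$ via the 2-local property applied to the pair $\bigl(x,\ e_{j,i}xe_{i,j}\bigr)$; this buys you $\Delta(x)=D(x)$ and $\Delta(e_{j,i}xe_{i,j})=D(e_{j,i}xe_{i,j})$ and nothing more. Your proposed resolution then writes $0=\Delta(e_{i,j})=[a,e_{i,j}]+\overline{\delta}(e_{i,j})$, i.e.\ it evaluates this same $D$ at a \emph{third} point $e_{i,j}$ and equates it with $\Delta(e_{i,j})$. A 2-local derivation gives a common implementing derivation only for two prescribed elements; there is no reason why the particular $D$ chosen for $(x, e_{j,i}xe_{i,j})$ should satisfy $D(e_{i,j})=\Delta(e_{i,j})=0$, so the conclusion $[a,e_{i,j}]=0$, hence $a_{i,i}=a_{j,j}$, is unjustified. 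Everything before that (the two compressions, correctly yielding $[a_{j,j},x_{i,i}]+\delta(x_{i,i})$ versus $[a_{i,i},x_{i,i}]+\delta(x_{i,i})$ in the $(j,j)$-corner) is fine, but the mismatch $a_{i,i}\neq a_{j,j}$ is not repaired.

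The paper's proof avoids this by spending its two slots differently: it forms the single element $y=x+e_{j,i}xe_{i,j}\in\mathcal{A}_{i,i}+\mathcal{A}_{j,j}$ and applies the 2-local property to the pair $(y,v)$, with $v$ from \eqref{vv}. Since $\Delta(v)=0$ and $\overline{\delta}(v)=0$, the implementing $a$ commutes with $v$ and hence has the form \eqref{trian}, which forces the constant diagonal $a_{i,i}=a_{j,j}=a_1$; then Lemma~\ref{lemmafour} identifies $e_{i,i}\Delta(y)e_{i,i}=\Delta(x)$ and $e_{j,j}\Delta(y)e_{j,j}=\Delta(e_{j,i}xe_{i,j})$, and both compressions of $[a,y]+\overline{\delta}(y)$ give $\bigl([a_1,x]+\delta(x)\bigr)e_{j,j}$. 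If you want to keep your term-by-term bookkeeping, you must adopt this packaging device (one combined element plus $v$); as written, your argument cannot be completed.
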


\begin{proof}  For $i=j$ we have already proved (see
Lemma~\ref{lemmafour}).

Suppose that $i\neq j.$  For an arbitrary element $x=x_{i,i}\in
\mathcal{A}_{i,i}$ , consider  $y=x+e_{j,i}xe_{i,j}\in
\mathcal{A}_{i,i}+\mathcal{A}_{j,j}.$ Take a derivation
\(D=\textrm{ad}(a)+\overline{\delta}\)   such that
\begin{center}
$\Delta(y)=[a, y]+\overline{\delta}(y)$ and $\Delta(v)=[a,
v]+\overline{\delta}(v),$
\end{center}
where $v$ is the element from~\eqref{vv}. Since $\Delta(v)=0$ and
\(\overline{\delta}(v)=0,\) it follows that $a$ has the
form~\eqref{trian}. By Lemma~\ref{lemmafour} we obtain that
\begin{eqnarray*}
e_{j,i}\Delta(x)e_{i,j} & = &
e_{j,i}e_{i,i}\Delta(y)e_{i,i}e_{i,j} = e_{j,i}[a,
y]e_{i,j}+e_{j,i}\overline{\delta}(y)e_{i,j}=\\
& = & \left([a_1,
x]+\delta(x)\right)e_{j,j},\\
 \Delta(e_{j,i}xe_{i,j}) & = &
e_{j,j}\Delta(y)e_{j,j}=e_{j,j}[a, y]e_{j,j}+e_{j,j}\overline{\delta}(y)e_{j,j}=\\
& = &
e_{j,j}[a,x+e_{j,i}xe_{i,j}]e_{j,j}+e_{j,j}\delta(x)e_{j,j}=\left([a_1,
x]+\delta(x)\right)e_{j,j}.
\end{eqnarray*}
The proof is complete. \end{proof}

Further in Lemmata~\ref{six}--\ref{zz} we assume that $n\geq 3.$

\begin{lemma}\label{six}
 $\Delta_{i,i}$ is additive for all $i\in \overline{1,n}.$
\end{lemma}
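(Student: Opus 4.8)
The plan is to fix the index $i$ and reduce. By the transport identity Lemma~\ref{lemmafive} (applied in both directions), additivity of $\Delta$ on $\mathcal A_{1,1}$ forces it on every $\mathcal A_{i,i}$, so we may take $i=1$. Identifying $\mathcal A_{1,1}$ with $\mathcal A$ via $c\mapsto c\,e_{1,1}$, we must show
\[
\Delta_{1,1}(a+b)=\Delta_{1,1}(a)+\Delta_{1,1}(b)\qquad(a,b\in\mathcal A);
\]
note $\Delta_{1,1}(a),\Delta_{1,1}(b),\Delta_{1,1}(a+b)\in\mathcal A_{1,1}$ by Lemma~\ref{three}. (This is all that is needed: being homogeneous, an additive $\Delta_{1,1}$ is linear, hence a Jordan derivation — every $2$-local derivation satisfies the Jordan identity — hence a derivation of $\mathcal A$ into $\mathcal M$ by property~\textbf{(J)}; but that last step belongs to the assembly of the theorem, not to this lemma.)

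Here the hypothesis $n\ge3$ enters: besides $1$ we have indices $2$ and $3$, and we use them to ``unfold'' elements of $\mathcal A_{1,1}$ into a $3$-cycle. For $c\in\mathcal A$ put $z(c)=c\,e_{1,2}+e_{2,3}+e_{3,1}$; then
\[
z(c)^{2}=c\,e_{1,3}+e_{2,1}+c\,e_{3,2},\qquad z(c)^{3}=c\,e_{1,1}+c\,e_{2,2}+c\,e_{3,3},
\]
so $z(c)^{3}$ is diagonal and, by Lemma~\ref{lemmafour}, $e_{1,1}\Delta\bigl(z(c)^{3}\bigr)e_{1,1}=\Delta(c\,e_{1,1})=\Delta_{1,1}(c)$. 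Two further ingredients are used. First, for any $2$-local derivation and any $w$ the identity $\Delta(w^{3})=\Delta(w)w^{2}+w\Delta(w)w+w^{2}\Delta(w)$ holds (apply the definition to the pair $(w,w^{3})$), so any derivation $D$ with $D(w)=\Delta(w)$ automatically satisfies $D(w^{3})=\Delta(w^{3})$. Second, $z(a+b)=z(a)+z(b)-z(0)$ with $z(0)=e_{2,3}+e_{3,1}\in\mbox{sp}\{e_{i,j}\}$, whence $z(a)^{3}+z(b)^{3}=z(a+b)^{3}$. Now take a derivation $D$ with $D(z(a))=\Delta(z(a))$ and $D(z(b))=\Delta(z(b))$; it then agrees with $\Delta$ at $z(a)^{3}$ and $z(b)^{3}$, so $D(z(a+b)^{3})=\Delta(z(a)^{3})+\Delta(z(b)^{3})$ and hence, applying $e_{1,1}(\cdot)e_{1,1}$,
\[
e_{1,1}D\bigl(z(a+b)^{3}\bigr)e_{1,1}=\Delta_{1,1}(a)+\Delta_{1,1}(b).
\]
Since $e_{1,1}\Delta\bigl(z(a+b)^{3}\bigr)e_{1,1}=\Delta_{1,1}(a+b)$, the lemma follows as soon as this $D$ is chosen so that in addition $D\bigl(z(a+b)^{3}\bigr)=\Delta\bigl(z(a+b)^{3}\bigr)$, i.e.\ $D\bigl(z(a+b)\bigr)=\Delta\bigl(z(a+b)\bigr)$.

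This last requirement is the crux, and the reason $n=2$ does not suffice: the $2$-local property only yields a derivation agreeing with $\Delta$ at two prescribed elements, whereas we should like simultaneous agreement at $z(a)$, $z(b)$ and $z(a+b)$. The way around this is to run the $2$-local property on pairs whose second member is always one of the calibrating matrices $u$, $v$ of \eqref{vv} (both of which lie in $\mbox{sp}\{e_{i,j}\}$ and are therefore annihilated by $\Delta$): a pair with second member $u$ forces the ad-part of the resulting derivation to be diagonal, a pair with second member $v$ forces it into the upper-triangular Toeplitz shape \eqref{trian}. Feeding the elements $z(a)$, $z(b)$, $z(a+b)$ through such pairs, and using once more $z(a+b)=z(a)+z(b)-z(0)$ together with $\Delta(z(0))=0$, one pins the inner parts down enough to identify the three expressions obtained above for $\Delta_{1,1}(a)$, $\Delta_{1,1}(b)$, $\Delta_{1,1}(a+b)$, the remaining cross-terms cancelling by the telescoping present in a $3$-cycle. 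This gives $\Delta_{1,1}(a+b)=\Delta_{1,1}(a)+\Delta_{1,1}(b)$, and then Lemma~\ref{lemmafive} yields the same for every $\Delta_{i,i}$.
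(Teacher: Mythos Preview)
Your cycle construction is correct up to the point you yourself flag as ``the crux'': you need a single derivation $D$ agreeing with $\Delta$ at the three elements $z(a)$, $z(b)$, $z(a+b)$ (or at least at $z(a)^3$, $z(b)^3$, $z(a+b)^3$), whereas the $2$-local property supplies agreement at only two. Your final paragraph does not close this gap. Pairing each $z(c)$ separately with $u$ or with $v$ produces, for each $c\in\{a,b,a+b\}$, a \emph{different} decomposition $\Delta(z(c))=[a^{(c)},z(c)]+\overline{\delta^{(c)}}(z(c))$ with a priori unrelated Toeplitz diagonals $a_1^{(c)}$ and unrelated $\delta^{(c)}$; the phrases ``pins the inner parts down enough'' and ``telescoping present in a $3$-cycle'' do not explain how these unknowns are eliminated, and I do not see a mechanism that does so. The relation $z(a+b)=z(a)+z(b)-z(0)$ is of no help either, since you do not yet know $\Delta$ is additive on such off-diagonal elements.

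The paper avoids this three-point problem entirely by a different placement of the data: it puts $x+y$, $x$, $y$ as the $(i,i)$-, $(k,k)$-, and $(s,s)$-entries of a \emph{single diagonal} matrix $z$ (this is where $n\ge3$ is used), pairs $z$ once with $v$, and then reads off all three quantities from the \emph{same} $[a,z]+\overline{\delta}(z)$. Concretely, Lemma~\ref{lemmafour} gives $\Delta_{i,i}(x+y)=e_{i,i}\Delta(z)e_{i,i}$, while Lemma~\ref{lemmafive} together with Lemma~\ref{lemmafour} gives $\Delta_{i,i}(x)=e_{i,k}\,e_{k,k}\Delta(z)e_{k,k}\,e_{k,i}$ and $\Delta_{i,i}(y)=e_{i,s}\,e_{s,s}\Delta(z)e_{s,s}\,e_{s,i}$. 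Because the Toeplitz $a$ has constant diagonal $a_1$, each of these equals $([a_1,\cdot]+\delta(\cdot))e_{i,i}$ with the \emph{same} $a_1$ and $\delta$, so additivity is immediate. The point is that Lemmata~\ref{lemmafour} and~\ref{lemmafive} already let one extract $\Delta_{i,i}$ at several arguments from $\Delta$ evaluated at a single diagonal matrix; no cubing, and only one invocation of the $2$-local property, is needed.
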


\begin{proof}  Let $i\in \overline{1,n}.$ Since $n\geq 3,$ we
can take different numbers $k, s$ such that \linebreak
$(k-i)(s-i)\neq 0.$

For arbitrary $x, y\in \mathcal{A}_{i,i}$  consider the diagonal
element $z\in
\mathcal{A}_{i,i}+\mathcal{A}_{k,k}+\mathcal{A}_{s,s}$ such that
\(z_{ i,i} = x+y,\, z_{k, k} = x,\, z_{s, s} = y.\) Take a
derivation \(D=\textrm{ad}(a)+\overline{\delta}\) such that
\begin{center}
$\Delta(z)=[a, z]+\overline{\delta}(z)$ and $\Delta(v)=[a,
v]+\overline{\delta}(v),$
\end{center}
where $v$ is  the element from~\eqref{vv}. Since $\Delta(v)=0$ and
\(\overline{\delta}(v)=0,\) it follows that $a$ has the
form~\eqref{trian}. Using Lemmata~\ref{lemmafour} and
\ref{lemmafive} we obtain that
\begin{eqnarray*}
\Delta_{i,i}(x+y) &   \stackrel{\eqref{kkkk}}{=}  &
e_{i,i}\Delta(z)e_{i,i}=e_{i,i}[a,
z]e_{i,i}+e_{i,i}\overline{\delta}(z)e_{i,i}=\\
& = & \left([a_1, x+y]+\delta(x+y)\right)e_{i,i},\\
 \Delta_{i,i}(x) &  \stackrel{\eqref{jiji}}{=}  & e_{i,k}\Delta(e_{k,i}x
e_{i,k})e_{k,i}\stackrel{\eqref{kkkk}}{=} e_{i,k}e_{k,k}\Delta(z)e_{k,k}e_{k,i}=\\
&  =  & e_{i,k}[a,
z]e_{k,i}+e_{i,k}\overline{\delta}(z)e_{k,i}=\left([a_1, x]+\delta(x)\right)e_{i,i},\\
\Delta_{i,i}(y) &  \stackrel{\eqref{jiji}}{=} &
e_{i,s}\Delta(e_{s,i}y
e_{i,s})e_{s,i}\stackrel{\eqref{kkkk}}{=} e_{i,s}e_{s,s}\Delta(z)e_{s,s}e_{s,i}=\\
& = & e_{i,s}[a,
z]e_{s,i}+e_{i,s}\overline{\delta}(z)e_{s,i}=\left([a_1,
y]+\delta(y)\right)e_{i,i}.
\end{eqnarray*} Hence
$$
\Delta_{i,i}(x+y)=\Delta_{i,i}(x)+\Delta_{i,i}(y).
$$
The proof is complete. \end{proof}

As it was mentioned in the beginning of the section  any additive
2-local derivation  is a Jordan derivation. Since
$\mathcal{A}_{i,i}\cong \mathcal{A}$ has the property
\textbf{(J)}, Lemma~\ref{six} implies the following result.

\begin{lemma}\label{lemmaseven}
 $\Delta_{i,i}$ is a derivation for all $i\in \overline{1,n}.$
\end{lemma}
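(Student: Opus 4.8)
The statement to prove is Lemma~\ref{lemmaseven}: that $\Delta_{i,i}$ is a derivation for each $i$. Here is my plan.

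\medskip

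The plan is to deduce this immediately from Lemma~\ref{six} together with the hypothesis \textbf{(J)} and the observation recorded at the start of the section. First I would recall that, since $\Delta$ is a 2-local derivation, for any single element $z \in M_n(\mathcal{A})$ there is an honest derivation $D_{z}$ with $\Delta(z) = D_z(z)$, and consequently $\Delta$ satisfies the Jordan identity $\Delta(z^2) = \Delta(z)z + z\Delta(z)$ pointwise; this was already noted in the paragraph preceding the property \textbf{(J)}. Restricting to $z = x_{i,i} \in \mathcal{A}_{i,i}$ and using that $\mathcal{A}_{i,i} = e_{i,i}M_n(\mathcal{A})e_{i,i}$ is a subalgebra isomorphic (as a unital Banach algebra) to $\mathcal{A}$ via $a \mapsto a\,e_{i,i}$, and similarly $e_{i,i}M_n(\mathcal{M})e_{i,i} \cong \mathcal{M}$ as an $\mathcal{A}_{i,i}$-bimodule, I would transport the map $\Delta_{i,i}\colon \mathcal{A}_{i,i} \to \mathcal{A}_{i,i}$ (note Lemma~\ref{three} guarantees the range lies in $\mathcal{A}_{i,i}$) to a map $\widetilde{\Delta}\colon \mathcal{A} \to \mathcal{M}$.

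\medskip

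Next I would check that $\widetilde{\Delta}$ is a Jordan derivation. By Lemma~\ref{six}, $\Delta_{i,i}$ is additive, and it is homogeneous because $\Delta$ is (2-local derivations are homogeneous, as noted); hence $\widetilde{\Delta}$ is linear. The Jordan identity for $\widetilde{\Delta}$ follows from the pointwise Jordan identity for $\Delta$ restricted to $\mathcal{A}_{i,i}$: for $x = x_{i,i}$, $x^2 = x_{i,i}x_{i,i}$ stays in $\mathcal{A}_{i,i}$, so $\Delta_{i,i}(x^2) = \Delta(x^2) = \Delta(x)x + x\Delta(x) = \Delta_{i,i}(x)x + x\Delta_{i,i}(x)$, and this transports to the corresponding identity for $\widetilde{\Delta}$ on $\mathcal{A}$ with respect to the $\mathcal{A}$-bimodule structure on $\mathcal{M}$. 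Thus $\widetilde{\Delta}$ is a Jordan derivation from $\mathcal{A}$ into the $\mathcal{A}$-bimodule $\mathcal{M}$.

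\medskip

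Finally, I invoke property \textbf{(J)}: since $\mathcal{A}$ has \textbf{(J)}, the Jordan derivation $\widetilde{\Delta}$ is in fact a derivation. Transporting back through the isomorphism $\mathcal{A}_{i,i} \cong \mathcal{A}$ shows $\Delta_{i,i}$ is a derivation on $\mathcal{A}_{i,i}$. I do not expect any serious obstacle here — the content is entirely in Lemma~\ref{six} (additivity) and in property \textbf{(J)}; the only point requiring a word of care is the verification that the corner subalgebra $e_{i,i}M_n(\mathcal{A})e_{i,i}$ with its corner bimodule $e_{i,i}M_n(\mathcal{M})e_{i,i}$ really does inherit the hypotheses, i.e.\ that \textbf{(J)} for $\mathcal{A}$ indeed yields \textbf{(J)} for $\mathcal{A}_{i,i}$, which is clear since $\mathcal{A}_{i,i}$ is isomorphic to $\mathcal{A}$ as a unital Banach algebra and any $\mathcal{A}_{i,i}$-bimodule is an $\mathcal{A}$-bimodule under this isomorphism.
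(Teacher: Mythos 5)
Your proposal is correct and follows essentially the same route as the paper: additivity from Lemma~\ref{six}, homogeneity and the pointwise Jordan identity from 2-locality (as noted at the start of the section), and then property \textbf{(J)} transported through the isomorphism $\mathcal{A}_{i,i}\cong\mathcal{A}$ (with the corner of $M_n(\mathcal{M})$ as the bimodule). The only difference is that you spell out the transport of the bimodule structure explicitly, which the paper leaves implicit.
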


Denote by $\mathcal{D}_n(\mathcal{A})$ the set of all diagonal
matrices from $M_n(\mathcal{A}),$ i.e. the set of all matrices of
the following form
\[
x=\left( \begin{array}{cccccc}
x_1 & 0 & 0 & \ldots & 0 \\
0 & x_2 &  0 & \ldots & 0\\
\vdots& \vdots& \vdots &\vdots & \vdots\\
0 & 0 &\ldots & x_{n-1} & 0\\
0 & 0 &\ldots & 0 & x_n
\end{array} \right).
\]

Let us consider a derivation $\overline{\Delta_{1,1}}$ of the
form~\eqref{cender}. By Lemmata~\ref{lemmafour} and
\ref{lemmafive} we obtain that

\begin{lemma}\label{adj}
$\Delta|_{\mathcal{D}_n(\mathcal{A})}=\overline{\Delta_{1,1}}|_{\mathcal{D}_n(\mathcal{A})}$
and $\overline{\Delta_{1,1}}|_{\mbox{sp}\{e_{i,j}\}_{i,j=1}^n}=0.$
\end{lemma}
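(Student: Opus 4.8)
The plan is to verify the two asserted identities directly from the definition of $\overline{\Delta_{1,1}}$ together with the diagonal-evaluation lemmas already established. Recall that $\overline{\Delta_{1,1}}$ is defined by formula~\eqref{cender} applied to the derivation $\Delta_{1,1}$ on $\mathcal{A}\cong\mathcal{A}_{1,1}$, namely $\overline{\Delta_{1,1}}\bigl(\sum_{i,j}x_{i,j}e_{i,j}\bigr)=\sum_{i,j}\Delta_{1,1}(x_{i,j})e_{i,j}$ (here we identify each entry $x_{i,j}\in\mathcal{A}$ with an element of $\mathcal{A}_{1,1}$ in the obvious way). That $\Delta_{1,1}$ is genuinely a derivation on $\mathcal{A}_{1,1}$ is Lemma~\ref{lemmaseven}, so $\overline{\Delta_{1,1}}$ is a well-defined derivation from $M_n(\mathcal{A})$ into $M_n(\mathcal{M})$.

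For the second identity, take $e_{i,j}$ with $1\le i,j\le n$. Its only nonzero entry is the $(i,j)$-entry, equal to $\mathbf 1\in\mathcal{A}$, so $\overline{\Delta_{1,1}}(e_{i,j})=\Delta_{1,1}(\mathbf 1)e_{i,j}$. Since $\Delta_{1,1}$ is a derivation on the unital algebra $\mathcal{A}_{1,1}$, we have $\Delta_{1,1}(\mathbf 1)=\Delta_{1,1}(\mathbf 1\cdot\mathbf 1)=\Delta_{1,1}(\mathbf 1)\mathbf 1+\mathbf 1\,\Delta_{1,1}(\mathbf 1)=2\Delta_{1,1}(\mathbf 1)$, hence $\Delta_{1,1}(\mathbf 1)=0$ and $\overline{\Delta_{1,1}}(e_{i,j})=0$. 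This gives $\overline{\Delta_{1,1}}|_{\mbox{sp}\{e_{i,j}\}}=0$ by linearity.

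For the first identity, let $x=\sum_{i=1}^n x_{i,i}\in\mathcal{D}_n(\mathcal{A})$ be diagonal. Then by definition $\overline{\Delta_{1,1}}(x)=\sum_{i=1}^n\Delta_{1,1}(x_{i,i})e_{i,i}$, where on the right $x_{i,i}$ is viewed inside $\mathcal{A}_{1,1}$, i.e. $\Delta_{1,1}(x_{i,i})e_{i,i}=e_{i,i}\,\Delta(e_{i,1}x_{i,i}e_{1,i}\text{ embedded})\,e_{i,i}$ — more precisely, Lemma~\ref{lemmafive} (with the roles of the indices $1$ and $i$) identifies $\Delta(x_{i,i})=e_{i,1}\Delta(\widetilde x)e_{1,i}$ where $\widetilde x$ is the corresponding element of $\mathcal{A}_{1,1}$, and Lemma~\ref{lemmafour} identifies $e_{i,i}\Delta(x)e_{i,i}=\Delta(x_{i,i})$. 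Combining these, the $(i,i)$-entry of $\overline{\Delta_{1,1}}(x)$ equals $e_{i,i}\Delta(x)e_{i,i}$ for every $i$. By Lemma~\ref{three} (applied with $i=j$), $\Delta$ maps $\mathcal{A}_{i,i}$ into $\mathcal{A}_{i,i}$, and more generally by Lemma~\ref{lemmafour} $\Delta(x)$ is diagonal with $(i,i)$-entry $\Delta(x_{i,i})$; hence $\Delta(x)=\sum_i e_{i,i}\Delta(x)e_{i,i}=\overline{\Delta_{1,1}}(x)$. Therefore $\Delta|_{\mathcal{D}_n(\mathcal{A})}=\overline{\Delta_{1,1}}|_{\mathcal{D}_n(\mathcal{A})}$.

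The only point requiring care — and the one I would treat as the real content — is the bookkeeping that makes the identification $\mathcal{A}_{1,1}\cong\mathcal{A}_{i,i}$ compatible across all the lemmas, so that ``$\Delta_{1,1}(x_{i,i})$'' in formula~\eqref{cender} really does coincide with $e_{i,i}\Delta(x)e_{i,i}$. This is exactly what Lemma~\ref{lemmafive} supplies: conjugation by the matrix units $e_{j,i}$ intertwines $\Delta$ on $\mathcal{A}_{i,i}$ with $\Delta$ on $\mathcal{A}_{j,j}$, so all the corner restrictions are the ``same'' derivation transported around, and the definition of $\overline{\Delta_{1,1}}$ via~\eqref{cender} is consistent with this. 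Everything else is immediate from linearity and the unital-derivation fact $\Delta_{1,1}(\mathbf 1)=0$.
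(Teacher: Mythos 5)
Your argument is essentially the paper's own: the paper proves Lemma~\ref{adj} precisely by combining Lemma~\ref{lemmafour} (the diagonal entries $e_{i,i}\Delta(x)e_{i,i}=\Delta(x_{i,i})$) with Lemma~\ref{lemmafive} (conjugation by matrix units transports $\Delta$ on $\mathcal{A}_{i,i}$ to $\Delta_{1,1}$ on $\mathcal{A}_{1,1}$), together with the definition~\eqref{cender} of $\overline{\Delta_{1,1}}$ and the fact that a derivation annihilates the unit; your write-up just supplies the bookkeeping the paper leaves to the reader. One caveat: the assertion that $\Delta(x)$ is \emph{diagonal} for diagonal $x$ does not follow from Lemma~\ref{lemmafour} or Lemma~\ref{three} as you cite them --- Lemma~\ref{lemmafour} only computes the $(i,i)$-entries and says nothing about the off-diagonal ones, and Lemma~\ref{three} concerns elements supported in a single corner. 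This vanishing is genuinely part of the statement (and is used later, e.g.\ in Lemma~\ref{ss}, where all entries of $\Delta(y)$ for diagonal $y$ are taken to be zero), so it needs the standard one-line argument with the element $u$ from~\eqref{vv}: choose $D=\mathrm{ad}(a)+\overline{\delta}$ of the form~\eqref{decompos} with $\Delta(x)=D(x)$ and $\Delta(u)=D(u)$; since $\Delta(u)=0$ and $\overline{\delta}(u)=0$, the element $a$ is diagonal, hence $\Delta(x)=[a,x]+\overline{\delta}(x)$ is diagonal. The paper leaves this step implicit as well, so with that one sentence added your proof is complete and coincides with the intended one.
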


Now we are in position to pass to the second step of our proof. In
this step we show that if a 2-local derivation $\Delta$ satisfies
the following conditions
\begin{center}
\(\Delta|_{\mathcal{D}_n(\mathcal{A})}\equiv 0\) and
\(\Delta|_{\mbox{sp}\{e_{i,j}\}_{i,j=1}^n}\equiv 0,\)
\end{center}
then it is identically equal to zero.

Below in the five Lemmata we
shall consider 2-local derivations which satisfy the latter equalities.

We denote by $e$ the unit of the algebra $\mathcal{A}.$

\begin{lemma}\label{ss}
Let $x\in M_n(\mathcal{A}).$ Then $\Delta(x)_{k,k}=0$ for all
\(k\in \overline{1,n}.\)
\end{lemma}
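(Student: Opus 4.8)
The plan is to fix $x \in M_n(\mathcal{A})$ and $k \in \overline{1,n}$, and to compute the $(k,k)$-entry of $\Delta(x)$ by pairing $x$ with a carefully chosen auxiliary diagonal or matrix-unit element on which $\Delta$ already vanishes. The natural candidate is the matrix unit $e_{k,k}$ itself, since $\Delta(e_{k,k}) = 0$ by hypothesis. So I would take a derivation $D = D_{x, e_{k,k}}$ with $D(x) = \Delta(x)$ and $D(e_{k,k}) = \Delta(e_{k,k}) = 0$. The condition $D(e_{k,k}) = 0$ should pin down the structure of $D$ enough to force $D(x)_{k,k} = 0$. Concretely, write $D = \mathrm{ad}(a) + \overline{\delta}$ in the form \eqref{decompos}. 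Then $0 = D(e_{k,k}) = [a, e_{k,k}] + \overline{\delta}(e_{k,k}) = [a, e_{k,k}]$, since $\overline{\delta}(e_{k,k}) = \delta(e)e_{k,k} = 0$ (as $\delta(e) = 0$ for any derivation $\delta$ on a unital algebra). Hence $a$ commutes with $e_{k,k}$, which means $a_{k,j} = 0 = a_{j,k}$ for all $j \neq k$; that is, the $k$-th row and $k$-th column of $a$ vanish off the diagonal.

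The second step is then a direct entry computation: $\Delta(x)_{k,k} = D(x)_{k,k} = [a,x]_{k,k} + \overline{\delta}(x)_{k,k} = (ax - xa)_{k,k} + \delta(x_{k,k})$. The $(k,k)$-entry of $ax$ is $\sum_j a_{k,j} x_{j,k} = a_{k,k} x_{k,k}$ (all other $a_{k,j}$ vanish), and similarly $(xa)_{k,k} = \sum_j x_{k,j} a_{j,k} = x_{k,k} a_{k,k}$. Therefore $[a,x]_{k,k} = a_{k,k} x_{k,k} - x_{k,k} a_{k,k} = [a_{k,k}, x_{k,k}]$, and so $\Delta(x)_{k,k} = [a_{k,k}, x_{k,k}] + \delta(x_{k,k})$. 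This last expression is exactly $D'(x_{k,k})$ where $D' = \mathrm{ad}(a_{k,k}) + \delta$ is a derivation of $\mathcal{A}$; on the other hand, identifying $x_{k,k}$ with the diagonal matrix $x_{k,k} e_{k,k} \in \mathcal{D}_n(\mathcal{A})$ and using Lemma~\ref{lemmafour} together with the hypothesis $\Delta|_{\mathcal{D}_n(\mathcal{A})} \equiv 0$, one gets $[a_{k,k}, x_{k,k}] + \delta(x_{k,k}) = e_{k,k}\Delta(x_{k,k} e_{k,k})e_{k,k} = \Delta(x_{k,k}e_{k,k}) = 0$. Hence $\Delta(x)_{k,k} = 0$.

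I expect the only subtlety to be bookkeeping: making sure $\overline{\delta}(e_{k,k}) = 0$ (immediate from $\delta(\mathbf{1}) = 0$) and correctly extracting the $(k,k)$-entry of the commutator, which collapses to a single-term sum precisely because $a$ has vanishing off-diagonal $k$-th row and column. There is no real obstacle here — the result is essentially a corollary of Lemma~\ref{lemmafour} plus the vanishing of $\Delta$ on matrix units and diagonal matrices — but one should double-check that it is legitimate to invoke Lemma~\ref{lemmafour} by testing $\Delta$ against the diagonal element $x_{k,k}e_{k,k}$, which indeed lies in $\mathcal{D}_n(\mathcal{A})$, so the hypothesis $\Delta|_{\mathcal{D}_n(\mathcal{A})} \equiv 0$ applies. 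Alternatively, one can bypass Lemma~\ref{lemmafour} entirely and argue directly: choose the derivation witnessing the pair $(x, x_{k,k}e_{k,k})$, but pairing with $e_{k,k}$ as above and then reducing to the diagonal case is the cleanest route.
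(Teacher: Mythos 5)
There is a genuine gap in the last step. After pairing $x$ with $e_{k,k}$ you correctly get a derivation $D=\mathrm{ad}(a)+\overline{\delta}$ with $D(x)=\Delta(x)$, $[a,e_{k,k}]=0$, hence $a_{j,k}=a_{k,j}=0$ for $j\neq k$ and $\Delta(x)_{k,k}=[a_{k,k},x_{k,k}]+\delta(x_{k,k})$. But the claimed identity $[a_{k,k},x_{k,k}]+\delta(x_{k,k})=e_{k,k}\Delta(x_{k,k}e_{k,k})e_{k,k}$ is unjustified: the left-hand side is $D(x_{k,k}e_{k,k})_{k,k}$ for the particular derivation $D$ chosen to witness $\Delta$ at the pair $(x,e_{k,k})$, and a $2$-local derivation gives you no control over $D$ at a \emph{third} point such as $x_{k,k}e_{k,k}$. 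The fact that $\Delta(x_{k,k}e_{k,k})=0$ only says that \emph{some other} derivation vanishes there; it imposes no relation between $a_{k,k}$, $\delta$ and $x_{k,k}$. Indeed, the constraint $D(e_{k,k})=0$ says nothing at all about $a_{k,k}$ or $\delta$, so the quantity $[a_{k,k},x_{k,k}]+\delta(x_{k,k})$ simply cannot be forced to vanish from this pairing. Your suggested alternative, pairing $x$ directly with $x_{k,k}e_{k,k}$, also fails: it gives $[a_{k,k},x_{k,k}]+\delta(x_{k,k})=0$ and $a_{i,k}x_{k,k}=x_{k,k}a_{k,i}=0$, but since $x_{k,k}$ need not be invertible you cannot conclude $a_{i,k}=a_{k,i}=0$, and the cross terms $\sum_{j\neq k}(a_{k,j}x_{j,k}-x_{k,j}a_{j,k})$ in $\Delta(x)_{k,k}$ survive. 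In short, each of your single pairings yields only half of the needed information, and the two halves refer to different derivations, so they cannot be combined.

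The missing idea is the paper's choice of auxiliary element: using homogeneity of $\Delta$, normalize so that $\|x_{k,k}\|<1$, and pair $x$ with the diagonal matrix $y$ having $y_{k,k}=e+x_{k,k}$ and zeros elsewhere. Since $e+x_{k,k}$ is invertible, the single relation $\Delta(y)=[a,y]+\overline{\delta}(y)=0$ simultaneously forces $a_{i,k}=a_{k,i}=0$ for $i\neq k$ (from the $(i,k)$ and $(k,i)$ entries, cancelling the invertible factor) and $a_{k,k}x_{k,k}-x_{k,k}a_{k,k}+\delta(x_{k,k})=0$ (from the $(k,k)$ entry, using $\delta(e)=0$), with the \emph{same} $a$ and $\delta$ that also compute $\Delta(x)$. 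That is exactly what your argument lacks, and it is where the Banach algebra hypothesis (invertibility of $e+x_{k,k}$) is genuinely used.
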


\begin{proof}  Let $x\in M_n(\mathcal{A}),$ and fix \(k\in \overline{1,n}.\)
Since $\Delta$ is
homogeneous, we can assume that $\|x_{k,k}\|<1,$ where $\|\cdot\|$
is the norm on $\mathcal{A}.$ Take a diagonal element $y$ in
\(M_n(\mathcal{A})\) with \(y_{k,k}=e+x_{k,k}\) and \(y_{i,i}=0\)
otherwise.  Since $\|x_{k,k}\|<1,$ it follows that $e+x_{k,k}$ is
invertible in $\mathcal{A}.$ Take a derivation
\(D=\textrm{ad}(a)+\overline{\delta}\) of the form
\eqref{decompos}  such that
$$
\Delta(x)=[a,x]+\overline{\delta}(x),\, \Delta(y)=[a
,y]+\overline{\delta}(y).
$$
Since  $y\in \mathcal{D}_n(\mathcal{A})$ we have that
$0=\Delta(y)=[a ,y]+\overline{\delta}(y),$ and therefore
\begin{eqnarray*}
0 &
= & \Delta(y)_{k,k}=a_{k,k}(e+x_{k,k})-(e+x_{k,k})a_{k,k}+\delta(e+x_{k,k})=0,\\
0 &  = &\Delta(y)_{i,k}=a_{i,k}(e +x_{k,k})=0,\\
0 &  = & \Delta(y)_{k,i}=-(e +x_{k,k}) a_{k,i}=0
\end{eqnarray*}
for all \(i\neq k.\) Thus
$$
a_{k,k}x_{k,k}-x_{k,k}a_{k,k}+\delta(x_{k,k})=0
$$
and
$$
a_{i,k}=a_{k,i}=0
$$
for all \(i\neq k.\) The above equalities imply that
$$
\Delta(x)_{k,k}=a_{k,k}x_{k,k}-x_{k,k}a_{k,k}+\delta(x_{k,k})=\Delta(y)_{k,k}=0.
$$
The proof is complete. \end{proof}

\begin{lemma}\label{zerro}
Let   $x$ be a matrix with $x_{k,s}=e.$  Then $\Delta(x)_{k,s}=0.$
\end{lemma}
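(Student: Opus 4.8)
The plan is to reduce the off-diagonal entry $\Delta(x)_{k,s}$ to the already-handled diagonal case, exactly as in the proof of Lemma~\ref{ss}, but now exploiting the idempotent-like behaviour of a matrix whose $(k,s)$-entry equals the unit $e$. So first I would fix $k\neq s$ (the case $k=s$ being covered by Lemma~\ref{ss}, since there $\Delta(x)_{k,k}=0$ always) and, using homogeneity of $\Delta$, rescale so that we may work with a controlled $x$. Then I would introduce an auxiliary matrix $y$ designed so that $e_{k,k}\,y\,e_{s,s}=e_{k,s}$ and $y$ lies in a subalgebra on which $\Delta$ is already known to vanish — for instance a matrix supported on the two indices $k,s$ of the form $y=e_{k,s}+(\text{something nilpotent})$, or more simply $y=e_{k,s}$ itself together with a diagonal correction. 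Pick a derivation $D=\mathrm{ad}(a)+\overline{\delta}$ of the form~\eqref{decompos} with $\Delta(x)=[a,x]+\overline{\delta}(x)$ and $\Delta(y)=[a,y]+\overline{\delta}(y)$.

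The key computation is then to read off the $(k,s)$-entry of both equations. Since $\overline{\delta}$ acts entrywise and $\delta(e)=0$, the term $\overline{\delta}(x)_{k,s}=\delta(x_{k,s})=\delta(e)=0$, so $\Delta(x)_{k,s}=[a,x]_{k,s}=\sum_{t}\bigl(a_{k,t}x_{t,s}-x_{k,t}a_{t,s}\bigr)$. Because $x_{k,s}=e$, the terms $t=s$ and $t=k$ contribute $a_{k,s}x_{s,s}$ and $-x_{k,k}a_{k,s}$ respectively, together with $a_{k,k}-a_{s,s}$ coming from the $x_{k,s}=e$ slots; the remaining terms involve other entries $x_{t,s},x_{k,t}$. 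The point is that the identical expression appears when we compute $\Delta(y)_{k,s}$, because $y$ was chosen to agree with $x$ in precisely the entries that survive — namely $y_{k,s}=e=x_{k,s}$ — while the extra entries of $x$ can be absorbed by first modifying $x$ (again using homogeneity and, if needed, the already-established vanishing of $\Delta$ on the relevant diagonal and matrix-unit parts) so that $x$ differs from $y$ only in entries that are annihilated on both sides of the $(k,s)$-slot. Concretely, I expect one shows $\Delta(x)_{k,s}=\Delta(y)_{k,s}$, and since $y\in\mathcal D_n(\mathcal A)+\mathrm{sp}\{e_{i,j}\}$ (or a subalgebra where $\Delta\equiv 0$), the right-hand side is $0$.

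The main obstacle will be getting the auxiliary matrix $y$ right: it must simultaneously (i) have $(k,s)$-entry equal to $e$, (ii) lie in the zero-set of $\Delta$ already secured by Lemma~\ref{adj} and the standing hypotheses, and (iii) match $x$ closely enough that the surviving part of $[a,x]_{k,s}$ equals $[a,y]_{k,s}$ after one uses the relations among the $a_{i,j}$ forced by $\Delta(y)=0$. I anticipate that, as in Lemma~\ref{ss}, one extracts from $\Delta(y)=0$ that certain entries $a_{i,j}$ vanish or satisfy cancelling identities, and these are exactly what is needed to kill the unwanted cross terms. If a single $y$ does not suffice, the natural fallback is to split $x$ into a sum and treat the pieces separately, but one must be careful that $\Delta$ is not yet known to be additive here — so the cleaner route is to absorb the extra entries of $x$ into the choice of $x$ itself via homogeneity and the vanishing already proved, rather than via additivity.
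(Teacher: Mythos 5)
There is a genuine gap in your reduction: the auxiliary matrix $y$ you need does not exist inside the set where $\Delta$ is known to vanish. At this stage $\Delta$ is known to be zero only on $\mathcal{D}_n(\mathcal{A})$ and on $\mbox{sp}\{e_{i,j}\}_{i,j=1}^n$ \emph{separately} (not on their sum, since additivity is precisely what is still unproved), and no matrix in either set can agree with $x$ in all the entries that survive in $[a,x]_{k,s}=\sum_{t}\bigl(a_{k,t}x_{t,s}-x_{k,t}a_{t,s}\bigr)$, because those involve the arbitrary entries of $x$ in row $k$ and column $s$. The only workable choice inside the zero-set is a diagonal $y$ with distinct scalar entries; then $\Delta(y)=0$ forces $a$ to be diagonal, but what remains is $\Delta(x)_{k,s}=a_{k,k}-a_{s,s}+\delta(e)=a_{k,k}-a_{s,s}$, and nothing in your argument shows this difference vanishes. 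Indeed, in the paper the identity $a_{k,k}=a_{s,s}$ in exactly this situation is \emph{deduced from} the present lemma and then used in Lemma~\ref{cc}, so it cannot be recovered by your route without circularity. Your fallback of splitting $x$ into a sum is, as you yourself note, unavailable because additivity is not yet known.

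The missing idea is different and much shorter: choose as the second point of the 2-local condition the matrix unit $e_{s,k}$ (transposed indices), not a diagonal matrix, and use the Leibniz rule rather than the decomposition $D=\mathrm{ad}(a)+\overline{\delta}$. With $D=D_{e_{s,k},x}$ one has $e_{s,k}\Delta(x)e_{s,k}=e_{s,k}D(x)e_{s,k}=D(e_{s,k}xe_{s,k})-D(e_{s,k})xe_{s,k}-e_{s,k}xD(e_{s,k})$; since $x_{k,s}=e$ gives $e_{s,k}xe_{s,k}=e_{s,k}$ and $D(e_{s,k})=\Delta(e_{s,k})=0$, the right-hand side equals $\Delta(e_{s,k})=0$. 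Hence $e_{k,k}\Delta(x)e_{s,s}=e_{k,s}\bigl(e_{s,k}\Delta(x)e_{s,k}\bigr)e_{k,s}=0$, i.e. $\Delta(x)_{k,s}=0$, with no auxiliary matrix and no analysis of the entries of $a$ at all.
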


\begin{proof} We have
\begin{eqnarray*}
e_{s,k}\Delta(x)e_{s,k} & = &
e_{s,k}D_{e_{s,k}, x} (x) e_{s,k}  =\\
& = & D_{e_{s,k}, x} (e_{s,k} x e_{s,k})-D_{e_{s,k}, x}(e_{s,k})
x e_{s,k}-e_{s,k} x D_{e_{s,k}, x} (e_{s,k})=\\
& = & D_{e_{s,k}, x} (e_{s,k})-\Delta(e_{s,k})
x e_{s,k}-e_{s,k} x \Delta(e_{s,k})=\\
&=& \Delta(e_{s,k})-0-0=0.
\end{eqnarray*}
Thus
$$
e_{k,k}\Delta(x)e_{s,s}=e_{k,s}e_{s,k}\Delta(x)e_{s,k}e_{k,s}=0.
$$
This means that \(\Delta(x)_{k,s}=0.\) The proof is complete.
\end{proof}

\begin{lemma}\label{cc} Let \(k, s\) be numbers such that \(k\neq s\) and let
 $x$ be a matrix with $x_{k,s}=e.$ Then $\Delta(x)_{s,k}=0.$
\end{lemma}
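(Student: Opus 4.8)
The plan is to isolate the $(s,k)$-entry of $\Delta(x)$ by the ``sandwich'' device already used for Lemma~\ref{zerro}, and then to force the resulting expression to vanish using the vanishing of diagonal entries (Lemma~\ref{ss}) and of $\Delta(x)_{k,s}$ (Lemma~\ref{zerro}); the hypothesis $n\ge 3$ will enter through an auxiliary index $t\in\{1,\dots,n\}\setminus\{k,s\}$.

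First I would set up the reduction. Since $e_{k,s}\Delta(x)e_{k,s}=\Delta(x)_{s,k}\,e_{k,s}$, $e_{k,s}xe_{k,s}=x_{s,k}e_{k,s}$ and $\Delta(e_{k,s})=0$, choosing a derivation $D=\mathrm{ad}(a)+\overline{\delta}$ of the form \eqref{decompos} with $D(x)=\Delta(x)$ and $D(e_{k,s})=0$ gives, by the Leibniz rule,
\[
\Delta(x)_{s,k}\,e_{k,s}=e_{k,s}\Delta(x)e_{k,s}=D\bigl(e_{k,s}xe_{k,s}\bigr)=D\bigl(x_{s,k}e_{k,s}\bigr).
\]
The condition $[a,e_{k,s}]=0$ makes the $k$-th column and the $s$-th row of $a$ diagonal and forces $a_{k,k}=a_{s,s}$, so the right-hand side is computed explicitly and one obtains
\[
\Delta(x)_{s,k}=[a_{s,s},x_{s,k}]+\delta(x_{s,k})=\eta(x_{s,k}),
\]
where $\eta:=\mathrm{ad}(a_{s,s})+\delta$ is a derivation of $\mathcal{A}$ into $\mathcal{M}$. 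The same $D$, inserted into $\Delta(x)_{s,s}=0$ (Lemma~\ref{ss}) and into $\Delta(x)_{k,s}=0$ (Lemma~\ref{zerro}, where $x_{k,s}=e$ and $\delta(e)=0$), yields further identities binding $a$, $\delta$ and the diagonal entries $x_{s,s},x_{k,k}$; in particular $\eta(x_{s,s})=0$.

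To finish, I would bring a third index $t\notin\{k,s\}$ (available because $n\ge 3$) into the companion. The crucial extra ingredient is that $x_{s,k}e_{t,t}\in\mathcal{D}_n(\mathcal{A})$, hence $\Delta(x_{s,k}e_{t,t})=0$; pairing $x$ with a companion assembled from $e_{k,s}$, a diagonal matrix with distinct scalar entries as in \eqref{vv}, and the block $x_{s,k}e_{t,t}$, the vanishing $D(x_{s,k}e_{t,t})=0$ produces the relation $\delta(x_{s,k})=-[a_{t,t},x_{s,k}]$, which together with $a_{t,t}=a_{s,s}$ (forced by the diagonal part of the companion) gives exactly $\eta(x_{s,k})=0$, i.e. $\Delta(x)_{s,k}=0$.

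The step I expect to be the real obstacle is choosing the companion matrix $w$ in $D=D_{x,w}$ correctly. It must be simultaneously (i) transparent enough that $\Delta(w)$ is already known from Lemmas~\ref{ss}--\ref{zerro} (so $w$ should be built only from a diagonal matrix with distinct scalar entries, matrix units, and the single diagonal block $x_{s,k}e_{t,t}$), and (ii) rigid enough that the commutation constraints on $a$ pin down the $s$-th row, the $k$-th column and the diagonal of $a$ to the extent needed. Moreover $a$ cannot be forced to be strictly diagonal on its $\{k,s\}$-block — any non-scalar $2\times2$ block leaves one free off-diagonal parameter — so the argument must instead arrange that the corresponding correction terms in the formula for $\Delta(x)_{s,k}$ cancel against the identities coming from $\Delta(x)_{s,s}=\Delta(x)_{k,s}=\Delta(x)_{k,k}=0$. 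Checking that this cancellation genuinely closes up is the delicate bookkeeping at the heart of the proof; everything else is the routine Leibniz-rule manipulation already displayed in Lemma~\ref{zerro}, and $n\ge 3$ is needed solely to keep the auxiliary index $t$ distinct from $k$ and $s$.
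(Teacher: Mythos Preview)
Your plan has a real gap at the step ``$a_{t,t}=a_{s,s}$ (forced by the diagonal part of the companion)''. A diagonal companion with distinct scalar entries $\mu_i$ (and $x_{s,k}$ in one slot) yields, from the vanishing of $\Delta$ on it, only the vanishing of the \emph{off-diagonal} entries of $a$: the $(i,i)$-equations reduce to $[a_{i,i},\mu_i e]+\delta(\mu_i e)=0$, which are vacuous and impose no relation whatsoever among the $a_{i,i}$. The single link between diagonal entries that you do have is $a_{k,k}=a_{s,s}$, and that comes not from the companion but from $\Delta(x)_{k,s}=0$ together with the hypothesis $x_{k,s}=e$. Nothing in your scheme ties $a_{t,t}$ to $a_{s,s}$, so the cancellation $\eta(x_{s,k})=0$ does not follow. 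There is also a structural problem: you first take $D=D_{x,e_{k,s}}$ to obtain $\Delta(x)_{s,k}=[a_{s,s},x_{s,k}]+\delta(x_{s,k})$, and then change the companion; but a new companion produces a new $D$ with new $a,\delta$, and constraints on $a$ coming from two different choices of $D$ cannot be combined.

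The paper sidesteps both issues with a \emph{single} diagonal companion $y$ defined by $y_{k,k}=x_{s,k}$ and $y_{i,i}=\lambda_i e$ for $i\neq k$, choosing distinct scalars with $|\lambda_i|>\|x_{s,k}\|$ so that each $\lambda_i e-x_{s,k}$ is invertible in the Banach algebra $\mathcal{A}$. Then $\Delta(y)=0$ forces $a$ to be \emph{fully} diagonal (contrary to your expectation that this cannot be achieved), $\Delta(x)_{k,s}=0$ gives $a_{k,k}=a_{s,s}$, and one reads off
\[
\Delta(x)_{s,k}=[a_{s,s},x_{s,k}]+\delta(x_{s,k})=[a_{k,k},y_{k,k}]+\delta(y_{k,k})=\Delta(y)_{k,k}=0.
\]
The decisive idea you are missing is to place $x_{s,k}$ at the diagonal slot $(k,k)$ --- the slot already tied to $(s,s)$ via $x_{k,s}=e$ --- rather than at an unrelated slot $(t,t)$. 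No auxiliary third index is needed; the argument in fact goes through already for $n\ge 2$.
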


\begin{proof} Take a diagonal
element \(y\) such that \(y_{k,k}=x_{s,k}\) and
\(y_{i,i}=\lambda_i e\) otherwise, where \(\lambda_i\,\, (i\neq
k)\) are distinct numbers with \(|\lambda_i|>\|x_{s,k}\|.\)  Take
a derivation \(D=\textrm{ad}(a)+\overline{\delta}\) such that
\begin{center}
$\Delta(x)=[a, x]+\overline{\delta}(x)$ and $\Delta(y)=[a,
y]+\overline{\delta}(y).$
\end{center}
Then
\begin{eqnarray*}
  0 & = & \Delta(y)_{ij} = \lambda_j a_{i,j}-\lambda_i a
_{i,j}=a_{i,j}(\lambda_j -\lambda_i),\, i\neq j,\, (i-k)(j-k)\neq 0,\\
 0 & = & \Delta(y)_{i,k} = a_{i,k}y_{k,k}-\lambda_i a_{i,k}
=a_{i,k}(x_{s, k}-\lambda_i),\, i\neq k,\\
0 & = &   \Delta(y)_{k,j} = a_{k,j}\lambda_{j}-y_{kk} a_{k j}
=(\lambda_j-x_{s, k})a_{k,j},\, j\neq k.
\end{eqnarray*}
Thus
 \(a_{i,j}=0\) for all \(i\neq j,\) i.e.  \(a\) is a diagonal
element. Since
\[
0=\Delta(x)_{ks}=a_{kk}-a_{ss},
\]
it follows that \(a_{k,k}=a_{s,s}.\) Finally,
\begin{eqnarray*}
\Delta(x)_{s,k} & = &
a_{s,s}x_{s,k}-x_{s,k}a_{k,k}+\delta(x_{s,k})=\\
& = &
a_{k,k}x_{s,k}-x_{s,k}a_{k,k}+\delta(y_{k,k})=\Delta(y)_{k,k}=0.
\end{eqnarray*}
The proof is complete. \end{proof}

\begin{lemma}\label{z} Let
$k\neq s$ and let \(x,\) \(y\) be matrices with \(x_{i,j}=y_{i,j}\)
for all \((i,j)\neq (s,k).\) Then
$\Delta(x)_{k,s}=\Delta(y)_{k,s}.$
\end{lemma}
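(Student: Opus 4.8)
\textbf{Proof plan for Lemma~\ref{z}.}

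The plan is to fix a $2$-local derivation $D_{x,y}=\textrm{ad}(a)+\overline{\delta}$ of the form \eqref{decompos} realizing $\Delta$ on the pair $x,y$, so that $\Delta(x)=[a,x]+\overline{\delta}(x)$ and $\Delta(y)=[a,y]+\overline{\delta}(y)$. Since $x_{i,j}=y_{i,j}$ for every $(i,j)\neq(s,k)$, the difference $x-y$ is supported only in the $(s,k)$-entry, and a direct computation gives
\begin{equation*}
\Delta(x)_{k,s}-\Delta(y)_{k,s}=\bigl([a,x]-[a,y]\bigr)_{k,s}=\bigl([a,x-y]\bigr)_{k,s}.
\end{equation*}
So the whole lemma reduces to showing that the $(k,s)$-entry of $[a,x-y]$ vanishes. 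Writing $w=x-y=w_{s,k}e_{s,k}$, we have $[a,w]=aw-wa$, and one computes $(aw)_{k,s}=a_{k,s}\,w_{s,k}\cdot[\,s=s\,]$ — actually $(aw)_{k,s}=\sum_t a_{k,t}w_{t,s}$, and $w_{t,s}=0$ unless $t=s$ and $s=k$; since $k\neq s$ this term is zero. Similarly $(wa)_{k,s}=\sum_t w_{k,t}a_{t,s}$, and $w_{k,t}=0$ unless $k=s$, which again fails. Hence $[a,w]_{k,s}=0$ automatically from $k\neq s$, and the lemma follows immediately.

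The point where one must be slightly careful is the bookkeeping of which matrix-unit products survive: the entry $w_{s,k}$ sits in row $s$, column $k$, so in the product $aw$ it can only contribute to row entries of the form $(\cdot,k)$ coming from column $s$ of $a$, i.e. to $(aw)_{\cdot,k}$, never to the $s$-column; dually $wa$ only affects row $s$. Since we are looking at the $(k,s)$-entry and $k\neq s$, neither $aw$ nor $wa$ can reach it. Thus no invertibility trick, no auxiliary diagonal element, and no appeal to the earlier Lemmata \ref{ss}--\ref{cc} is actually needed here — the statement is a purely formal consequence of \eqref{decompos} together with the support condition on $x-y$. (One could alternatively phrase it as: the derivations realizing $\Delta$ at $x$ and at $y$ can be taken to be the \emph{same} derivation, because $x$ and $y$ agree off a single off-diagonal slot, and then use linearity of that derivation on the rank-one difference.)

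The only genuine subtlety — and the reason the hypothesis $k\neq s$ is present — is precisely to kill the potential $[a,w]_{k,s}$ contribution; without it (i.e. for $k=s$) one would instead be looking at a diagonal entry and the argument would break, which is consistent with Lemma~\ref{ss} being proved by a separate invertibility argument. So in the write-up I would: (1) invoke \eqref{decompos} to get $a$ and $\delta$; (2) subtract the two expressions for $\Delta(x)$ and $\Delta(y)$, noting $\overline{\delta}(x)_{k,s}-\overline{\delta}(y)_{k,s}=\delta(x_{k,s}-y_{k,s})=\delta(0)=0$ since $(k,s)\neq(s,k)$; (3) expand $[a,x-y]_{k,s}$ and observe it is zero because $x-y$ is supported on $(s,k)$ and $k\neq s$; (4) conclude $\Delta(x)_{k,s}=\Delta(y)_{k,s}$.
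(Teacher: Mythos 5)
Your proposal is correct and takes essentially the same route as the paper: both fix the single derivation $D_{x,y}=\mathrm{ad}(a)+\overline{\delta}$ of the form \eqref{decompos} realizing $\Delta$ at the pair $x,y$, and then observe that the $(k,s)$-entry $\sum_j\left(a_{k,j}x_{j,s}-x_{k,j}a_{j,s}\right)+\delta(x_{k,s})$ involves only entries of $x$ in row $k$, column $s$, and position $(k,s)$, none of which is $(s,k)$ since $k\neq s$, so it is unchanged when $x$ is replaced by $y$. Your entry-wise bookkeeping for $[a,x-y]_{k,s}$ and the remark that $\delta(x_{k,s}-y_{k,s})=0$ are exactly the paper's computation written as a difference.
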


\begin{proof} Take a derivation \(D=\textrm{ad}(a)+\overline{\delta}\)  such that
\begin{center}
$\Delta(x)=[a, x]+\overline{\delta}(x)$ and $\Delta(y)=[a,
y]+\overline{\delta}(y).$
\end{center}
Then
\begin{eqnarray*}
\Delta(x)_{k,s} & = & \sum\limits_{j=1}^n
\left(a_{k,j}x_{j,s}-x_{k,j}a_{j,s}\right)+\delta(x_{ks})=\\
& = & \sum\limits_{j=1}^n
\left(a_{k,j}y_{j,s}-y_{k,j}a_{j,s}\right)+\delta(y_{ks})=\Delta(y)_{k,s}.
\end{eqnarray*} The proof is complete. \end{proof}

\begin{lemma}\label{zz}
Let \(k\neq s.\) Then $\Delta(x)_{k,s}=0.$
\end{lemma}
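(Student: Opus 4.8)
The plan is to reduce the general off-diagonal vanishing $\Delta(x)_{k,s}=0$ to the two special cases already established, using the perturbation lemmas. By Lemma~\ref{z}, for fixed $k\neq s$ the value $\Delta(x)_{k,s}$ depends only on the entries $x_{i,j}$ with $(i,j)\neq(s,k)$; so I may freely modify the $(s,k)$-entry of $x$ without changing $\Delta(x)_{k,s}$. Hence it suffices to prove the claim for a matrix $x$ whose $(s,k)$-entry has been chosen conveniently.

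Next I would like to use Lemma~\ref{zerro}, which gives $\Delta(x)_{k,s}=0$ whenever $x_{k,s}=e$. The difficulty is that a general $x$ has an arbitrary $(k,s)$-entry, not $e$. To handle this, first I would use homogeneity of $\Delta$ together with Lemma~\ref{z}: writing $x$ with its $(s,k)$-entry set to zero (allowed by Lemma~\ref{z}), I can try to express $x$, or a scalar multiple of it, as a matrix obtained from one with $(k,s)$-entry equal to $e$ by a further modification only in the $(s,k)$-slot. Concretely, consider the matrix $x'$ that agrees with $x$ everywhere except that $x'_{k,s}=e$; by Lemma~\ref{zerro}, $\Delta(x')_{k,s}=0$. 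However $x$ and $x'$ differ in the $(k,s)$-entry, not the $(s,k)$-entry, so Lemma~\ref{z} does not directly relate them. This is the main obstacle: bridging an arbitrary $(k,s)$-entry to the entry $e$.

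The way around this is to pass through a third index, which is available since $n\geq 3$: pick $t\notin\{k,s\}$. The idea is that the $(k,s)$-entry can be "created" from a product $e_{k,t}\cdot e_{t,s}$ living in slots that we can control. More precisely, I would consider a matrix $y$ with $y_{k,t}=e$ and $y_{t,s}=x_{k,s}$ (and all other entries, in particular the $(s,k)$-entry, set to $0$), and compute $\Delta(y)$ via a derivation $D=\mathrm{ad}(a)+\overline{\delta}$ that agrees with $\Delta$ on $y$ and on some auxiliary matrix (of the type $u$ or $v$ from \eqref{vv}, or a diagonal matrix) chosen to pin down the structure of $a$; the constraints $\Delta(y)_{k,t}=0$ (from Lemma~\ref{zerro}) and the already-proved relations then force enough vanishing of $a$ that $\Delta(y)_{k,s}$ — which picks up a term $\delta(0)=0$ plus commutator terms involving the controlled entries — must vanish. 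Combining with Lemma~\ref{z} applied to $x$ and $y$ (they can be arranged to agree outside the $(s,k)$-slot) yields $\Delta(x)_{k,s}=\Delta(y)_{k,s}=0$.

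Alternatively, and perhaps more cleanly, one can argue directly: for general $x$, apply Lemma~\ref{z} to replace $x$ by the matrix $\tilde x$ agreeing with $x$ except $\tilde x_{s,k}=e$, so $\Delta(x)_{k,s}=\Delta(\tilde x)_{k,s}$; then by Lemma~\ref{cc} (with the roles of $(k,s)$ and $(s,k)$ interchanged) applied to $\tilde x$, which has $(s,k)$-entry equal to $e$, we get $\Delta(\tilde x)_{k,s}=0$. Indeed Lemma~\ref{cc} states that if a matrix has $z_{k,s}=e$ then $\Delta(z)_{s,k}=0$; swapping the names of the indices, if $\tilde x_{s,k}=e$ then $\Delta(\tilde x)_{k,s}=0$. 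Hence $\Delta(x)_{k,s}=0$, and this completes the proof. The step I expect to require the most care is checking that Lemma~\ref{z} legitimately allows the modification of precisely the $(s,k)$-entry while leaving $\Delta(x)_{k,s}$ invariant, and that the index bookkeeping in invoking Lemma~\ref{cc} with swapped indices is consistent with the standing hypotheses $\Delta|_{\mathrm{sp}\{e_{i,j}\}}=0$ and $\Delta|_{\mathcal{D}_n(\mathcal{A})}=0$.
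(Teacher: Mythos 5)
Your final ``alternatively'' argument is exactly the paper's proof: replace the $(s,k)$-entry of $x$ by $e$ to get $\tilde x$, apply Lemma~\ref{cc} with the indices $k$ and $s$ interchanged to get $\Delta(\tilde x)_{k,s}=0$, and then use Lemma~\ref{z} to conclude $\Delta(x)_{k,s}=\Delta(\tilde x)_{k,s}=0$. The earlier detour through a third index $t$ is unnecessary, but the argument you settle on is correct and identical in substance to the paper's.
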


\begin{proof} Take a matrix  \(y\) with \(y_{s,k}=e\) and
\(y_{i,j}=x_{i,j}\) otherwise. By Lemma~\ref{cc} we have that
\(\Delta(y)_{k,s}=0.\) Further Lemma~\ref{z} implies that
$$
\Delta(x)_{k,s}=\Delta(y)_{k,s}=0.
$$
The proof is complete. \end{proof}

Now we are in position to prove
Theorem~\ref{mainlocal}.

\textit{Proof of Theorem~\ref{mainlocal}}.  Let $\Delta$ be a
2-local derivation from  $M_{n}(\mathcal{A})$ into
$M_{n}(\mathcal{M}),$ where $n\geq 3.$ By Lemma~\ref{lemmatwo}
there exists a derivation $D$  such that
$\Delta|_{\mbox{sp}\{e_{i,j}\}_{i,j=1}^{n}}=D|_{\mbox{sp}\{e_{i,j}\}_{i,j=1}^{n}}.$
Consider a 2-local derivation \(\Theta=\Delta-D.\) Since $\Theta$
is equal to zero on $\mbox{sp}\{e_{i,j}\}_{i,j=1}^{n},$ by
Lemma~\ref{adj} we obtain that
$\Theta|_{\mathcal{D}_{n}(\mathcal{A})}=\overline{\Theta_{11}}|_{\mathcal{D}_{n}(\mathcal{A})},$
where \(\overline{\Theta_{11}}\) is the derivation defined
by~\eqref{cender}. As in Lemma~\ref{adj} we have that
\begin{center}
$\left(\Theta-\overline{\Theta_{1
1}}\right)|_{\mbox{sp}\{e_{i,j}\}_{i,j=1}^{n}}\equiv 0$ and
$\left(\Theta-\overline{\Theta_{1
1}}\right)|_{\mathcal{D}_{n}(\mathcal{A})}\equiv 0.$
\end{center}
Now for an arbitrary element \(x\in M_n(\mathcal{A}),\) by
Lemmata~\ref{ss} and \ref{zz} we obtain that
\(\left(\Theta-\overline{\Theta_{1 1}}\right)(x)_{k,s}=0\) for all
\(k,s.\) Thus \(\left(\Theta-\overline{\Theta_{1
1}}\right)(x)=0,\) i.e., \(\Theta=\overline{\Theta_{1 1}}.\) So,
\(\Delta=\overline{\Theta_{1 1}}+D\) is a derivation. The proof is
complete. $\Box$

\section{An application to 2-local derivations on algebras of locally measurable operators}

In this section we apply Theorem~\ref{mainlocal} to the
description of 2-local derivations on  the algebra of
locally  measurable operators affiliated with a von Neumann algebra
and on its subalgebras.

In \cite[Corollary 3.11]{Bre2005} it was proved that if  an associative algebra (ring)
\(\mathcal{A}\) contains a noncommutative simple subalgebra (subring)  \(\mathcal{A}_0\)
which contains the unit of \(\mathcal{A}\), then
every Jordan derivation from \(\mathcal{A}\)  into any
\(\mathcal{A}\)-bimodule is a derivation, i.e. \(\mathcal{A}\)
satisfies the property \textbf{(J)}. In particular, if   there
exists a subalgebra   \(\mathcal{A}_0\) of \(\mathcal{A}\) which is
isomorphic to \(M_n(\mathbb{C})\) (\(n\geq 2\))  and contains the
unit of \(\mathcal{A},\) then  \(\mathcal{A}\) has the
property \textbf{(J)}.

Let \(M\) be a von Neumann algebra and denote by \(S(M)\) the
algebra of all measurable operators and by \(LS(M)\) � the algebra
of all locally measurable operators affiliated with \(M\) (see for
example \cite{MA07, Seg}).

\begin{theorem}\label{kaplanal} Let $M$ be an arbitrary
von Neumann algebra without abelian direct summands and let
\(LS(M)\) be the algebra of all locally measurable operators affiliated with
 \(M.\) Then any 2-local derivation $\Delta$ from \(M\)
into $LS(M)$ is a derivation.
\end{theorem}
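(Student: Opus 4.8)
The plan is to reduce the statement to Theorem~\ref{mainlocal} via a standard central decomposition argument, together with the structure theory of von Neumann algebras. First I would invoke the classification of von Neumann algebras: since $M$ has no abelian direct summand, one can write $M$ as a direct sum $M = M_1 \oplus M_2 \oplus M_3$, where $M_1$ is of type II$_1$, $M_2$ is properly infinite (types II$_\infty$ and III together), and $M_3$ is of type I with no abelian part, i.e. a direct sum of homogeneous components $M_{3,k} \cong M_{n_k}(\mathbb{C}) \overline{\otimes} Z_k$ with $n_k \geq 2$. (One may collapse cases as convenient.) A 2-local derivation $\Delta : M \to LS(M)$ respects this central decomposition: using central projections $z_i$ with $z_1+z_2+z_3 = \mathbf{1}$ and applying the defining property of a 2-local derivation to pairs $x, z_i$, one checks in the usual way that $z_i \Delta(x) = \Delta(x) z_i = \Delta(z_i x)$, so $\Delta$ splits as a direct sum of 2-local derivations $\Delta_i : M_i \to LS(M_i)$. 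Hence it suffices to treat each summand separately.

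Next I would dispose of each summand by exhibiting inside $M_i$ (more precisely inside $z_i M$) a unital subalgebra isomorphic to a full matrix algebra $M_n(\mathcal{B})$ with $n \geq 3$ over a suitable unital Banach algebra $\mathcal{B}$ satisfying property \textbf{(J)}, so that Theorem~\ref{mainlocal} applies — but in fact the cleaner route is to directly identify $z_i M$ itself with a matrix algebra $M_n(\mathcal{B})$, $n\geq 3$. For the properly infinite part $M_2$, halving the identity repeatedly yields $M_2 \cong M_3(\widetilde M_2)$ (indeed $M_n(\widetilde M_2)$ for any $n$) where $\widetilde M_2 = e M_2 e$ for a suitable projection $e$; since $\widetilde M_2$ is again a von Neumann algebra it is a unital Banach algebra, and it has property \textbf{(J)} because it contains a copy of $M_2(\mathbb{C})$ containing its unit (as noted in the paragraph preceding Theorem~\ref{kaplanal}, via \cite[Corollary 3.11]{Bre2005}). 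The type I homogeneous components $M_{3,k}$ of degree $n_k \geq 3$ are literally $M_{n_k}(Z_k)$, and $Z_k$ (abelian von Neumann algebra) trivially has property \textbf{(J)} only if we are careful — so for $n_k = 2$ and for the type II$_1$ summand, which are \emph{not} a priori of the form $M_{\geq 3}(\cdot)$, I would use that they too contain a unital copy of $M_n(\mathbb{C})$ with $n\geq 3$ (a type II$_1$ factor-summand contains unital copies of all $M_n(\mathbb{C})$; a type I$_2$ homogeneous algebra $M_2(Z)$ contains $M_2(\mathbb{C})$ but to get property \textbf{(J)} only $M_2(\mathbb{C})\hookrightarrow M_2(Z)$ is needed — $n\geq 2$ suffices in \cite[Cor 3.11]{Bre2005}). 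Thus every $M_i$ has property \textbf{(J)}; but property \textbf{(J)} alone is not enough — Theorem~\ref{mainlocal} needs the ambient algebra to be a $3\times 3$ (or larger) matrix algebra. So the real content is the matrix presentation: in each non-type-I$_2$ case we have $z_i M \cong M_n(\mathcal{B})$ with $n\geq 3$ outright, and I would handle any residual type I$_2$ and low-degree pieces by absorbing them into a larger matrix amplification or by a separate short argument (e.g. a type I$_2$ homogeneous summand $M_2(Z)$ can be treated by embedding it in $M_4(Z)\cong M_2(M_2(Z))$ after tensoring the 2-local derivation up — or, more honestly, by citing that the abelian commutative-regular case was settled in \cite{AKA} and combining).

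Once each summand $z_i M$ is written as $M_n(\mathcal{B})$ with $n \geq 3$ and $\mathcal{B}$ having property \textbf{(J)}, Theorem~\ref{mainlocal} applies \emph{verbatim} with $\mathcal{A} = \mathcal{B}$ and $\mathcal{M} = M_n\bigl(LS(\mathcal{B})\bigr) = z_i LS(M)$ — note $LS(M_n(\mathcal{B})) = M_n(LS(\mathcal{B}))$ and $z_i LS(M)$ is an $M_n(\mathcal{B})$-bimodule — yielding that each $\Delta_i$ is a derivation. Summing over $i$ gives that $\Delta$ is a derivation on all of $M$. The main obstacle I anticipate is the bookkeeping at the low-degree end: guaranteeing a genuine $M_{\geq 3}$ matrix structure (not merely property \textbf{(J)}) on the type I$_2$ homogeneous summand and on a type II$_1$ summand that might be a direct integral of II$_1$ factors. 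For type II$_1$ this is fine — halving a finite projection three times produces equivalent orthogonal subprojections summing to $\mathbf{1}$, giving $M_n$-structure for any $n$; for type I$_2$ one genuinely needs the extra observation that $M_2(Z)$, while only a $2\times 2$ matrix algebra, can be amplified: a 2-local derivation on $M_2(Z)$ extends compatibly to one on $M_2(Z)\otimes M_2(\mathbb{C}) = M_4(Z)$, where Theorem~\ref{mainlocal} bites, and restricting back recovers that the original map was a derivation. Making that amplification precise (checking the extended map is still 2-local) is the one delicate point, and I would spell it out carefully before concluding.
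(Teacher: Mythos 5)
Your treatment of the properly infinite and type II summands is essentially the paper's own argument: split off central summands (the identity $z_i\Delta(x)=\Delta(z_ix)$ is proved in the paper exactly as you sketch), write $z_iM\cong M_n(\mathcal{B})$ with $n\geq 3$ via halving (the paper uses \cite[Lemma 6.3.3, Lemma 6.5.6]{KR} to get three equivalent projections at once, covering II$_1$ together with II$_\infty$ and III), observe that $\mathcal{B}=eMe$ has property \textbf{(J)} by Bre\v{s}ar's criterion, and apply Theorem~\ref{mainlocal} with bimodule $M_n(LS(\mathcal{B}))\cong z_iLS(M)$. Up to bookkeeping, that part of your proposal is correct and matches the paper.

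The genuine gap is the finite type I part, and you have correctly located but not closed it. For a homogeneous I$_{n}$ summand $M_{n}(Z)$ with $Z$ abelian, Theorem~\ref{mainlocal} is not available even when $n\geq 3$, because you never establish that $Z$ has property \textbf{(J)}: Bre\v{s}ar's Corollary 3.11 needs a unital noncommutative simple subalgebra, which $Z$ does not have, and property \textbf{(J)} concerns arbitrary (not symmetric, not Banach) bimodules, so it cannot be waved through for abelian algebras. For I$_2$ your two fallbacks do not work as stated. The amplification idea --- extend the 2-local derivation from $M_2(Z)$ to $M_4(Z)\cong M_2(M_2(Z))$ and apply Theorem~\ref{mainlocal} there --- has no justification: a 2-local derivation is a nonlinear map defined pointwise by varying derivations, and there is no canonical extension to the amplification, let alone one whose 2-locality you could verify without already knowing the original map is additive. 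The citation of \cite{AKA} is also off target: that theorem concerns 2-local derivations \emph{on} $M_n(\mathcal{A})$ for a commutative \emph{regular} algebra $\mathcal{A}$ (e.g.\ on $S(M)$ itself), whereas here the map is defined only on $M=M_n(Z)$ with values in the larger bimodule $M_n(S(Z))$, and $Z$ is not regular. The paper closes exactly this case by quoting \cite[Corollary 3.12]{HLQ}, which treats 2-local derivations from $M_n(\mathcal{A})$ into $M_n(\mathcal{M})$ for commutative $\mathcal{A}$ and a commuting bimodule, with $n\geq 2$; some such input (or an independent argument for matrix algebras over abelian von Neumann algebras with $n=2$) is needed, and your proposal as written does not supply it.
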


\begin{proof}  Let $z$ be a central projection in $M.$ Since $D(z)=0$
for an arbitrary derivation $D,$ it is clear that $\Delta(z)=0$
for any $2$-local derivation $\Delta$ from \(M\) into $LS(M).$
Take $x\in M$ and let $D$ be a derivation from \(M\) into $LS(M)$
such that $\Delta(z x)=D(z x), \Delta(x)=D(x)$. Then we have
$\Delta(z x)=D(z x)=D(z)x+zD(x)=z\Delta(x).$ This means that every
2-local derivation $\Delta$ maps $zM$ into $zLS(M)\cong LS(z M)$
for each central projection $z\in M.$ So, we may consider the
restriction of $\Delta$ onto $zM.$ Since an arbitrary von Neumann
algebra without abelian direct summands can be decomposed along a
central projection into the direct sum of von Neumann algebras
 of  type I$_n, n\geq2,$ type I$_\infty,$ type II and type III, we may consider these cases separately.

If  $M$ is a von Neumann algebra of  type I$_n,$ $n\geq 2,$
\cite[Corollary 3.12]{HLQ} implies that any 2-local derivation
from \(M\) into \(LS(M)\equiv S(M)\)  is a derivation.

Let  the von Neumann algebra $M$ have one of the types I$_\infty,$
II or III. Then  the halving Lemma \cite[Lemma 6.3.3]{KR} for type
I$_\infty$-algebras and \cite[Lemma 6.5.6]{KR} for type II or III
algebras, imply that the unit of the algebra $M$ can be
represented as a sum of mutually equivalent orthogonal projections
$e_1, e_2, e_3$ from $M.$ Then the map $x\mapsto
\sum\limits_{i,j=1}^3 e_ixe_j$ defines an  isomorphism between the
algebra $M$ and the matrix algebra $M_3(\mathcal{A}),$ where
$\mathcal{A}=e_{1,1}Me_{1,1}.$ Further, the algebra \(LS(M)\) is
isomorphic to the algebra \(M_3(LS(\mathcal{A})).\) Moreover, the
algebra \(\mathcal{A}\) has same type as the algebra \(M,\) and
therefore contains a subalgebra isomorphic to \(M_3(\mathbb{C}).\)
This means that the algebra \(\mathcal{A}\) satisfies the property
\textbf{(J)}. Therefore Theorem~\ref{mainlocal} implies that any
2-local derivation from  $M$ into \(LS(M)\) is a derivation. The
proof is complete.
\end{proof}

Taking into account that any derivation on an abelian von Neumann
algebra is trivial, Theorem~\ref{kaplanal} implies the following
result (cf. \cite[Theorem 2.1]{AyuKuday2014} and \cite[Theorem
3.1]{AK2016JP}).

\begin{corollary}\label{localcor}
Let $M$ be an arbitrary von Neumann algebra. Then any 2-local
derivation $\Delta$ on  $M$ is a derivation.
\end{corollary}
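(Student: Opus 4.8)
The plan is to reduce immediately to Theorem~\ref{kaplanal} by splitting off the abelian part of $M$. Write $z_0$ for the largest central projection such that $z_0 M$ is abelian and put $z_1 = \mathbf{1} - z_0$, so that $M = z_0 M \oplus z_1 M$ with $z_0 M$ abelian and $z_1 M$ a von Neumann algebra without abelian direct summands. First I would record the elementary fact, already used inside the proof of Theorem~\ref{kaplanal}, that a $2$-local derivation sends $zM$ into $zM$ for every central projection $z$: indeed $D(z) = 0$ for any derivation $D$ (from $z = z^2$ one gets $D(z) = 2zD(z)$, whence $zD(z) = 0$ and $D(z) = 0$), so choosing a derivation $D$ with $D(zx) = \Delta(zx)$ and $D(x) = \Delta(x)$ yields $\Delta(zx) = D(z)x + zD(x) = z\Delta(x)$. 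Applying this to $z_0$ and to $z_1$, the restrictions $\Delta_0 = \Delta|_{z_0 M}$ and $\Delta_1 = \Delta|_{z_1 M}$ are $2$-local derivations of $z_0 M$ and of $z_1 M$ respectively, and for $x \in M$ one has $\Delta(x) = z_0\Delta(x) + z_1\Delta(x) = \Delta_0(z_0 x) + \Delta_1(z_1 x)$.

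Next I would treat the two summands separately. Since $z_1 M$ is a von Neumann algebra without abelian direct summands and $z_1 M \subseteq LS(z_1 M)$, Theorem~\ref{kaplanal} applies to $\Delta_1$, viewed as a $2$-local derivation from $z_1 M$ into $LS(z_1 M)$, and shows it is a derivation; by the invariance of the central summands just noted, $\Delta_1$ takes its values back in $z_1 M$. For the abelian summand, every derivation of the abelian von Neumann algebra $z_0 M$ is zero; hence for each $x \in z_0 M$, choosing via the $2$-local property applied to the pair $(x,x)$ a derivation $D_{x,x}$ with $D_{x,x}(x) = \Delta_0(x)$ forces $\Delta_0(x) = 0$. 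Thus $\Delta_0 \equiv 0$, which is in particular a derivation.

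Finally I would reassemble the two pieces. For $x, y \in M$, writing $x = z_0 x + z_1 x$ and likewise for $y$, orthogonality of $z_0$ and $z_1$ gives $xy = z_0 x \cdot z_0 y + z_1 x \cdot z_1 y$ with the summands lying in $z_0 M$ and $z_1 M$; since $\Delta_1(z_1 x) \in z_1 M$ the cross terms drop out of $\Delta(x)y + x\Delta(y)$, and comparing this with $\Delta(xy) = \Delta_0(z_0 x\, z_0 y) + \Delta_1(z_1 x\, z_1 y) = 0 + \Delta_1(z_1 x)\, z_1 y + z_1 x\, \Delta_1(z_1 y)$ shows that $\Delta$ is a derivation. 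I do not expect any genuine obstacle in this argument: the only point requiring care is the invariance of the central summands under $\Delta$, and this is precisely the observation already isolated at the start of the proof of Theorem~\ref{kaplanal}.
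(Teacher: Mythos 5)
Your argument is correct and is essentially the paper's own (one-line) proof: the paper likewise derives the corollary from Theorem~\ref{kaplanal} applied to the non-abelian central summand, together with the triviality of derivations on the abelian summand. You have merely spelled out the central-projection invariance and the reassembly, which the paper leaves implicit.
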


For each \(x \in  LS(M)\)  set \(s(x) = l(x) \vee  r(x),\) where
\(l(x)\) is the left and \(r(x)\) is the right support of \(x.\)

\begin{lemma}\label{zerro} Let \(\mathcal{B}\) be a subalgebra of \(LS(M)\)  such that \(M \subseteq
\mathcal{B}\) and let \(\Delta: \mathcal{B} \to LS(M)\) be a
2-local derivation such that \(\Delta|_M\equiv 0.\) Then
\(\Delta\equiv 0.\)
\end{lemma}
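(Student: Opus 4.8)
The plan is to exploit the given hypothesis $\Delta|_M \equiv 0$ together with the 2-local property to show that $\Delta(x) = 0$ for an arbitrary $x \in \mathcal{B}$. Fix such an $x$, and recall that $s(x) = l(x)\vee r(x)$ is a projection in $M$. The crucial observation is that, since $x \in LS(M)$, the operator $x$ is ``supported'' on $s(x)$, i.e. $s(x)\,x = x\,s(x) = x$, and the reduced algebra $s(x)M s(x)$ together with $x$ generates a manageable piece. First I would pick, for the pair $\{x, \mathbf 1 + s(x)\}$ (or some similarly chosen element of $M$), a derivation $D = D_{x,\,\mathbf 1 + s(x)}$ from $\mathcal B$ into $LS(M)$ with $D(x) = \Delta(x)$ and $D(\mathbf 1 + s(x)) = \Delta(\mathbf 1 + s(x)) = 0$, the last equality because $\mathbf 1 + s(x) \in M$ and $\Delta|_M \equiv 0$. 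Since $D$ is a genuine derivation and $s(x)$ is a projection in $M$, from $D(s(x)) = 0$ one gets $D(x) = D(s(x)\,x\,s(x)) = s(x)\,D(x)\,s(x)$, so $\Delta(x) = \Delta(x)$ already lives in $s(x)\,LS(M)\,s(x)$.

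The next step is to use invertibility inside the reduced algebra. Up to the homogeneity of $\Delta$ (which is automatic for 2-local derivations, as noted at the start of Section~2) I may rescale $x$ so that, say, $\|x\| < 1$ in an appropriate sense; then $e := s(x)$ satisfies that $e + x$ is invertible in the reduced unital algebra $e\,LS(M)\,e$, with inverse $(e+x)^{-1} = \sum_{k\ge 0}(-x)^k \in LS(M)$. Now I would apply the 2-local property to the pair $\{x,\; (e+x)^{-1}\}$: there is a derivation $\widetilde D$ with $\widetilde D(x) = \Delta(x)$ and $\widetilde D((e+x)^{-1}) = \Delta((e+x)^{-1})$. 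Differentiating the identity $(e+x)(e+x)^{-1} = e$ and using $\widetilde D(e) = 0$ (since $e\in M$) gives $\widetilde D(x)(e+x)^{-1} + (e+x)\widetilde D((e+x)^{-1}) = 0$, hence $\Delta((e+x)^{-1}) = -(e+x)^{-1}\Delta(x)(e+x)^{-1}$, which expresses $\Delta$ at $(e+x)^{-1}$ in terms of $\Delta(x)$; more importantly, one can now play $x$ against elements of $M$ that ``conjugate'' $(e+x)$ to something in $M$ or make the inner part of the relevant derivation vanish.

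The cleanest route, which I expect to be the one intended, is to combine $\Delta|_M\equiv 0$ with a clever choice of a second argument so that the inner part of the chosen derivation is forced to be zero on $x$. Concretely, for the pair $\{x, y\}$ with $y\in M$ a suitable diagonal/invertible element (e.g.\ $y = e + x^*x$ or $y$ built from spectral projections of $s(x)$ times distinct scalars), write $D_{x,y} = \operatorname{ad}(a) + (\text{something})$ in the spirit of the decomposition used throughout Section~2; then $0 = \Delta(y) = D_{x,y}(y) = [a,y] + (\cdots)$ pins down $a$ enough that $[a,x]$ must vanish, and because any derivation of $M$ into $LS(M)$ is inner (a known fact for von Neumann algebras, used implicitly in Theorem~\ref{kaplanal}) the non-inner remainder is zero on $M$, forcing $\Delta(x) = D_{x,y}(x) = 0$. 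I would then conclude $\Delta \equiv 0$.

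The main obstacle will be the middle step: arranging a \emph{single} auxiliary element $y\in M$ (so that $\Delta(y)=0$ is available) whose associated derivation $D_{x,y}$ is simultaneously constrained enough to force $D_{x,y}(x)=0$. Unlike in the matrix-algebra lemmas, here there is no matrix-unit scaffolding, so one must manufacture the needed rigidity purely from spectral projections of $s(x)$ (or of $x^*x$) inside $M$, exploiting that these projections lie in $M$ where $\Delta$ already vanishes, and from the invertibility of $e+x$ in the reduced algebra. Handling the case when $s(x)$ has no nontrivial reducing structure — e.g.\ $M$ a factor and $s(x)$ ``large'' — is where the argument needs the most care, and there one falls back on the invertibility trick with $(e+x)^{-1}$ together with the fact that $\Delta$ kills all of $M$.
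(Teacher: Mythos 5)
Your plan has genuine gaps and does not reach a proof. The first breakdown is the rescaling step: elements of $\mathcal{B}\subseteq LS(M)$ are in general unbounded operators, there is no norm on $LS(M)$, so homogeneity of $\Delta$ cannot make ``$\|x\|<1$'' hold, and the Neumann series $\sum_k(-x)^k$ need not converge; moreover the resulting identity $\Delta((e+x)^{-1})=-(e+x)^{-1}\Delta(x)(e+x)^{-1}$ is circular and places no constraint forcing $\Delta(x)=0$, since $(e+x)^{-1}\notin M$ in general. The second, more serious, breakdown is your final step: you want to write $D_{x,y}=\operatorname{ad}(a)+(\text{something})$ and argue the ``non-inner remainder'' vanishes, but $D_{x,y}$ is a derivation on $\mathcal{B}$ (possibly all of $LS(M)$), not on $M$, and no such structural decomposition is available here --- the paper explicitly remarks that for type II$_1$ algebras the form of derivations on $S(M)\equiv LS(M)$ is an open problem, and that this lemma is proved precisely \emph{without} information on the general form of derivations. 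The fact that derivations from $M$ into $LS(M)$ are inner does not transfer to $D_{x,y}$, whose restriction to $M$ need not equal $\Delta|_M=0$. You yourself flag the choice of the auxiliary element $y$ as the unresolved ``main obstacle,'' and indeed no single $y\in M$ with $\Delta(y)=0$ pins down $D_{x,y}$ in the absence of matrix-unit or inner-derivation scaffolding.

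The paper's actual argument is quite different and quantitative. For $x\in\mathcal{B}\cap S(M)$ one truncates by the spectral projections $e_n$ of $|x|$: then $xe_n\in M$, so $\Delta(xe_n)=0$, and applying the 2-local property to the \emph{pair} $(x,xe_n)$ gives $\Delta(x)=D_{x,xe_n}(x-xe_n)=D_{x,xe_n}(xe_n^{\perp})$. The key external input is the estimate of Ber--Chilin--Sukochev \cite[Lemma 4.3]{Ber2011}, $\mathcal{D}(s(D(y)))\leq 3\,\mathcal{D}(s(y))$ for a dimension function $\mathcal{D}$, which yields $\mathcal{D}(s(\Delta(x)))\leq 6\,\mathcal{D}(e_n^{\perp})\downarrow 0$ because $e_n^{\perp}$ is finite for large $n$; hence $\Delta(x)=0$. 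For general $x\in\mathcal{B}$ one uses central projections $z_n\uparrow\mathbf{1}$ with $xz_n\in S(M)$ and the pair $(x,z_nx)$ to get $z_n\Delta(x)=\Delta(z_nx)=0$ for all $n$. Your proposal contains neither the truncation-by-spectral-projections idea (choosing the second point of the pair to be a bounded truncation of $x$ itself) nor the support-dimension estimate, and these are exactly what replaces the missing structural information about derivations on $LS(M)$.
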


\begin{proof} Let us first take an  arbitrary element \(x\in  \mathcal{B} \cap S(M).\) Let \(|x| =
\int\limits_0^\infty \lambda\, de_\lambda\) be  the spectral
resolution of \(|x|.\) Since \(x\in S(M),\) it follows that
\(e_n^\perp\) is a finite projection for a sufficiently large
\(n.\)  Take a derivation \(D_{x, xe_n}\) such that
 \(\Delta(x)=D_{x, xe_n}(x)\) and \(\Delta(xe_n)=D_{x, xe_n}(xe_n),\) \(n\in \mathbb{N}.\)  Since \(x
 e_n\in M,\) it follows that \(\Delta(xe_n)=0\) for all  \(n\in \mathbb{N}.\) We have
 \begin{eqnarray*}
\Delta(x) & = & \Delta(x)-\Delta(xe_n)=D_{x, xe_n}(x)-D_{x, xe_n}(xe_n)=\\
& = & D_{x, xe_n}(x-xe_n)=D_{x, xe_n}(xe_n^\perp).
\end{eqnarray*}
Let \(\mathcal{D}\) be a dimension function on the lattice
\(P(M)\) of all projections from  \(M\) (see \cite{Seg}). Using
\cite[Lemma 4.3]{Ber2011} we obtain that
 \begin{eqnarray*}
\mathcal{D}(s(\Delta(x))) & = & \mathcal{D}(s(D_{x,
xe_n}(xe_n^\perp)))\leq 3\mathcal{D}(s(xe_n^\perp))=
3\mathcal{D}(l(xe_n^\perp)\vee r(xe_n^\perp))\leq\\ & \leq &
3\mathcal{D}\left(l(xe_n^\perp)\right)+3\mathcal{D}(r(xe_n^\perp))\leq6\mathcal{D}(e_n^\perp)\downarrow
0,
\end{eqnarray*}
and therefore \(\Delta(x)=0.\)

Now let take an element \(x\in \mathcal{B}.\) By the  definition of
locally measurable operator there exists a sequence \(\{z_n\}\) of
central projections in \(M\)  such that \(z_n \uparrow
\mathbf{1}\) and \(xz_n \in S(M)\) for all \(n \in \mathbb{N}\)
(see \cite{MA07}).  Taking into account the previous case we
obtain that
\begin{eqnarray*}
z_n\Delta(x) & = & z_n D_{x, z_n x}(x)= D_{x, z_n x}(z_n x)-D_{x, z_n x}(z_n)x=\\
& = & D_{x, z_n x}(z_nx)=\Delta(z_n x)=0,
\end{eqnarray*}
i.e., \(z_n \Delta(x)=0\) for all \(n \in  \mathbb{N}.\) Hence
\(\Delta(x)=0.\) The proof is complete.
\end{proof}

\begin{theorem}\label{localmeas}(cf. \cite[Theorem 5.5]{AK2016}).
Let $M$ be an arbitrary von Neumann algebra without abelian direct
summands and let \(\mathcal{B}\) be a subalgebra of \(LS(M)\) such
that \(M \subseteq \mathcal{B}.\) Then any 2-local derivation
$\Delta$ on  $\mathcal{B}$ is a derivation.
\end{theorem}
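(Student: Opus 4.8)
The plan is to reduce Theorem~\ref{localmeas} to Theorem~\ref{kaplanal} together with Lemma~\ref{zerro}. Let $\Delta\colon\mathcal{B}\to LS(M)$ be an arbitrary 2-local derivation. First I would observe that the restriction $\Delta|_M\colon M\to LS(M)$ is again a 2-local derivation: for $x,y\in M\subseteq\mathcal{B}$ the derivation $D_{x,y}$ supplied by the definition on $\mathcal{B}$ restricts to a derivation from $M$ into $LS(M)$ which still agrees with $\Delta$ on $x$ and $y$. Hence by Theorem~\ref{kaplanal} the map $\Delta|_M$ is a derivation from $M$ into $LS(M)$.

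Next I would extend this derivation to all of $\mathcal{B}$. Since $M\subseteq\mathcal{B}\subseteq LS(M)$ and every derivation of $M$ into $LS(M)$ is known to be spatial (inner in $LS(M)$), there exists $a\in LS(M)$ with $\Delta(x)=ax-xa$ for all $x\in M$; write $D=\operatorname{ad}(a)$, which is a genuine derivation of the whole algebra $\mathcal{B}$ into $LS(M)$ (indeed of $LS(M)$ into itself). Now consider $\Theta=\Delta-D$. A routine check shows that a 2-local derivation minus a derivation is again a 2-local derivation, so $\Theta$ is a 2-local derivation on $\mathcal{B}$, and by construction $\Theta|_M\equiv 0$.

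At this point Lemma~\ref{zerro} applies verbatim: it states precisely that a 2-local derivation $\Theta\colon\mathcal{B}\to LS(M)$ with $\Theta|_M\equiv 0$ must vanish identically. Therefore $\Theta\equiv 0$, i.e. $\Delta=D=\operatorname{ad}(a)$ is a derivation, which completes the proof.

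I expect the only genuinely delicate point to be the claim that $\Delta|_M$, being a derivation of $M$ into $LS(M)$, is implemented by an element of $LS(M)$; this is the standard spatiality/innerness of derivations of a von Neumann algebra into its algebra of locally measurable operators, and it is exactly what makes $D=\operatorname{ad}(a)$ a well-defined derivation of all of $\mathcal{B}$. Everything else—that restrictions of 2-local derivations are 2-local, and that subtracting a derivation preserves the 2-local property—is formal, and the substantive analytic work has already been absorbed into Theorem~\ref{kaplanal} and the dimension-function estimate in Lemma~\ref{zerro}.
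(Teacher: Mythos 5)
Your overall skeleton is the same as the paper's: restrict $\Delta$ to $M$, apply Theorem~\ref{kaplanal} to see that $\Delta|_M$ is a derivation from $M$ into $LS(M)$, subtract from $\Delta$ a derivation of $\mathcal{B}$ extending $\Delta|_M$, and kill the difference with the lemma asserting that a 2-local derivation on $\mathcal{B}$ vanishing on $M$ is identically zero. The formal steps (restrictions of 2-local derivations are 2-local; a 2-local derivation minus a derivation is 2-local) are fine. The gap is exactly at the point you yourself flag as delicate: the claim that every derivation from $M$ into $LS(M)$ is spatial, i.e.\ of the form $\mathrm{ad}(a)$ with $a\in LS(M)$. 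This is not a standard known fact, and it is false in general even for von Neumann algebras without abelian direct summands. For instance, let $M=M_2(\mathcal{Z})$ with $\mathcal{Z}=L^\infty[0,1]$, so that $LS(M)\cong M_2(S(\mathcal{Z}))$ and $S(\mathcal{Z})\cong L^0[0,1]$. The algebra $L^0[0,1]$ admits nontrivial (highly discontinuous) derivations, and these can be chosen nonzero at a bounded element such as $f(t)=t$; restricting gives a nonzero derivation $\delta:\mathcal{Z}\to S(\mathcal{Z})$. Its entrywise lift $\overline{\delta}$ of the form \eqref{cender} is a derivation from $M$ into $LS(M)$ which does not vanish on the center $Z(M)$, whereas every inner derivation $\mathrm{ad}(a)$, $a\in LS(M)$, does vanish on $Z(M)\subseteq Z(LS(M))$. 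Hence $\overline{\delta}$ is not spatial. (For type II$_1$ summands the situation is even murkier: as the paper's closing remark notes, the structure of derivations on $S(M)\equiv LS(M)$ is an open problem there, so spatiality certainly cannot be invoked as standard.) Thus your definition of $D$ as $\mathrm{ad}(a)$ does not exist in general, and the proof breaks at that step.

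The correct substitute, and what the paper actually uses, is an \emph{extension} theorem rather than an \emph{innerness} theorem: by \cite[Theorem 4.8]{Ber2011} (Ber--Chilin--Sukochev), every derivation from $M$ into $LS(M)$ extends to a derivation from any intermediate subalgebra $\mathcal{B}$ (with $M\subseteq\mathcal{B}\subseteq LS(M)$) into $LS(M)$. Taking $D$ to be such an extension of $\Delta|_M$, the remainder of your argument goes through verbatim: $\Delta-D$ is a 2-local derivation on $\mathcal{B}$ vanishing on $M$, hence identically zero, so $\Delta=D$ is a derivation. Note that this is precisely why the authors emphasize that their result needs no information on the general form (in particular, no innerness) of derivations on these algebras.
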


\begin{proof}
By Theorem~\ref{kaplanal} the restriction \(\Delta|_M\) of
\(\Delta,\) is a derivation from \(M\) into \(LS(M).\) By
\cite[Theorem 4.8]{Ber2011} the derivation \(\Delta|_M\)  can be
extended to a  derivation from \(\mathcal{B}\) into  \(LS(M),\)
which we denote by \(D.\) Since the 2-local derivation
\(\Delta-D\) is equal to zero on \(M,\) Lemma~\ref{zerro} implies
that \(\Delta\equiv D.\) The proof is complete.
\end{proof}

\begin{remark}
As it was mentioned in the introduction, the paper \cite{AKA}
gives necessary and sufficient conditions on a commutative regular
algebra to admit 2-local derivations which are not derivations. In
particular, for an arbitrary abelian von Neumann algebra \(M\) with a non
atomic lattice of projections \(P(M)\) the algebras \(S(M)\)  and \(LS(M)\) always admit
a 2-local derivation which is not a derivation.

A complete description  of derivations on the algebra $LS(M)$
for type I von Neumann algebras $M$ is given in   \cite[Section
3]{AK2016}). Moreover, for general von Neumann algebras every
derivation on the algebra $LS(M)$ is inner, provided that $M$ is a
properly infinite von Neumann algebra  \cite{AK2016, Ber2014}. 
But for type II\(_1\)   von
Neumann algebra \(M\) description of  structure of derivations on
the algebra \(S(M)\equiv LS(M)\) is still an open problem (see
\cite{AK2016}). In this connection  it should be noted that
Theorem~~\ref{localmeas} is  one of the first results on 2-local
derivations without information on  the general form  of
derivations on these algebras.
\end{remark}

\end{document}